\documentclass[american, pdftex]{scrartcl}
\KOMAoptions{DIV=12, paper=a4, abstract=on}
\usepackage[utf8]{inputenc}
\usepackage[T1]{fontenc}
\usepackage{babel, csquotes, lmodern, microtype, fixltx2e}
\usepackage{amsmath, amssymb, amsthm, pgfplots, subcaption}


\usepackage[style=alphabetic,backend=bibtex8]{biblatex}
\addbibresource{KamHua13.bib}
\ExecuteBibliographyOptions{%
   url=true, eprint=true, doi=true, isbn=false,
   firstinits=true, maxnames=3, maxbibnames=99, 
   sorting=nyt, sortcites=true,
}

\AtEveryBibitem{%
   \ifentrytype{book}{\clearfield{pages}}{}
   \ifentrytype{article}{\clearfield{url}}{}
}
\renewbibmacro*{publisher+location+date}{%
  \printlist{publisher}%
  \setunit*{\addcomma\space}%
  \printlist{location}%
  \setunit*{\addcomma\space}%
  \usebibmacro{date}%
  \newunit{}
}

\usepackage{hyperref}
\hypersetup{pdfauthor={Lennard Kamenski and Weizhang Huang},
   pdftitle={Conditioning of FE equations / a density function approach},
   pdfsubject={Preprint}
}

\def\graphicsPath{.}
\def\dataPath{.}
\pgfplotsset{%
   compat=newest,
   width=1.0\textwidth, height=0.7\textwidth,
   legend cell align=left, legend pos=south west,
   ymajorgrids, yminorticks=false,
   xmajorgrids, xminorticks=false,
   every axis/.append style={font=\footnotesize, mark options={solid},
      line width=0.7pt},
}
\newcommand{\MyLineStyle}{solid}
\newcommand{\MyLineStyleScaled}{solid}
\newcommand{\MyLineStyleUniform}{dashed}
\newcommand{\MyColorExact}{black}
\newcommand{\MyColorUniform}{black}

\newcommand{\MyColorKHX}{blue}
\newcommand{\MyColorRho}{green!50!black}
\newcommand{\MyMarkExact}{|}
\newcommand{\MyMarkKHX}{square*}
\newcommand{\MyMarkRho}{triangle*}

\makeatletter \g@addto@macro\@floatboxreset\centering \makeatother

\providecommand{\bu}{\boldsymbol{u}}
\providecommand{\bx}{\boldsymbol{x}}
\providecommand{\bxi}{\boldsymbol{\xi}}
\DeclareMathOperator{\Diag}{diag}
\providecommand{\diag}{\Diag}
\DeclareMathOperator{\spanM}{span}
\DeclareMathOperator{\tr}{tr}
\providecommand{\abs}[1]{\lvert#1\rvert}
\providecommand{\Abs}[1]{\left\lvert#1\right\rvert}
\providecommand{\Norm}[1]{\left\lVert#1\right\rVert}
\providecommand{\dx}{\,d\boldsymbol{x}}
\providecommand{\dxi}{\,d\boldsymbol{\xi}}
\providecommand{\FDF}{ { (F_K') }^{-1} \D_K { (F_K') }^{-T}}
\providecommand{\SAS}{S^{-1} A S^{-1}}
\newcommand{\D}{\mathbb{D}}
\newcommand{\cO}{\mathcal{O}}
\newcommand{\R}{\mathbb{R}}
\newcommand{\cT}{\mathcal{T}}
\newcommand{\V}[1]{\boldsymbol{#1}}
\newcommand{\Krho}{\Abs{K}_\rho}
\newcommand{\KrhoMin}{\Abs{K_{\min}}_\rho}
\newcommand{\Wrho}{\Abs{\omega_j}_\rho}
\newcommand{\WrhoMin}{\Abs{\omega_{\min}}_\rho}

\newtheorem{theorem}{Theorem}[section]
\newtheorem{lemma}{Lemma}[section]
\theoremstyle{definition}
\newtheorem{rem}{Remark}[section]

\begin{document}


\title{A study on the conditioning of finite element equations
   with~arbitrary anisotropic meshes via~a~density function approach}

\author{Lennard Kamenski%
   \thanks{%
      Weierstrass Institute for Applied Analysis and Stochastics,
      Berlin, Germany
      (\href{mailto:kamenski@wias-berlin.de}{\nolinkurl{kamenski@wias-berlin.de}}).
      \newline{}
      Supported in part by the DFG under grant KA~3215/2-1.
   }
\and Weizhang Huang%
   \thanks{%
      Department of Mathematics, the University of Kansas,
      Lawrence, KS, 
      USA
      (\href{mailto:huang@math.ku.edu}{\nolinkurl{huang@math.ku.edu}}).
      \newline{}
      Supported in part by the NSF under grant DMS-1115118.
   }
}

\maketitle

\begin{abstract}
The linear finite element approximation of a general linear diffusion problem with arbitrary anisotropic meshes is considered.
The conditioning of the resultant stiffness matrix and the Jacobi preconditioned stiffness matrix is investigated using a density function approach proposed by Fried in~1973.
It is shown that the approach can be made mathematically rigorous for general domains and used to develop bounds on the smallest eigenvalue and the condition number that are sharper than existing estimates in one and two dimensions and  comparable in three and higher dimensions.
The new results reveal that the mesh concentration near the boundary has less influence on the condition number than the mesh concentration in the interior of the domain.
This is especially true for the Jacobi preconditioned system where the former has little or almost no influence on the condition number.
Numerical examples are presented.

\begin{description}
\item[Keywords:] conditioning, finite element, anisotropic diffusion, anisotropic mesh, stiffness matrix, extreme eigenvalue, Jacobi preconditioning, diagonal scaling.
\item[2010 MSC:] 65N30, 65N50, 65F35, 65F15.
\end{description}

\end{abstract}


\section{Introduction}

Mesh adaptation is a common tool for use in the numerical solution of partial differential equations (PDEs) to enhance computational efficiency.
It often results in nonuniform meshes whose elements vary significantly in size and shape from place to place on the physical domain.
Nonuniform meshes could lead to ill-conditioned linear systems and their solution may deteriorate the efficiency of the entire computation.
It is thus important in practice as well as in theory to understand how mesh nonuniformity affects the conditioning of linear systems resulting from discretization of PDEs on nonuniform meshes.

The issue has been studied by a number of researchers mostly for the linear finite element approximation of the Laplace operator or a general diffusion operator by developing bounds on the extremal eigenvalues on the resultant stiffness matrix, e.g.,\ see~\cite{Fri73, Wat87, BanSco89, Shewch02, DuWanZhu09, KamHuaXu13} for second-order elliptic PDEs or~\cite{AinMcLTra99, AinMcLTra00, GraMcL06} for a more general setting of elliptic bilinear forms on Sobolev spaces of real index $m \in [-1,1]$.

The estimation of the largest eigenvalue is well understood and it is easy to show that the largest eigenvalue is bounded by a multiple (with a constant depending on mesh connectivity) of the maximum of the largest eigenvalues of the local stiffness matrices~\cite{Fri73}.
Moreover, the largest diagonal entry of the stiffness matrix is a good estimate for the largest eigenvalue: it is tight within a factor of at most $d+1$ for any mesh, where $d$ is the dimension of the physical domain~\cite{KamHuaXu13}.
Sharp estimates in terms of mesh geometry are available for both isotropic~\cite{AinMcLTra99, Shewch02, DuWanZhu09, GraMcL06} and anisotropic~\cite{KamHuaXu13} diffusion.

The estimation of the smallest eigenvalue is more challenging.
Currently there are two approaches for this purpose.
The first approach utilizes Sobolev's inequality and is first used by Bank and Scott~\cite{BanSco89} for the Laplace operator with isotropic meshes in $d \geq 2$ dimensions to develop a lower bound on the smallest eigenvalue of a diagonally scaled stiffness matrix and show that the condition number of the scaled stiffness matrix is comparable to that with a uniform mesh.
A similar result for elliptic bilinear forms on Sobolev spaces of real index $m \in [-1,1]$ with shape-regular meshes in $d \geq 2$ dimensions was derived by Ainsworth, McLean, and Tran~\cite{AinMcLTra99,AinMcLTra00}.
Their result was later generalized to locally quasi-uniform meshes\footnote{Meshes with neighboring elements having comparable size and shape.} in $d \geq 3$ dimensions by Graham and McLean~\cite{GraMcL06}.
Recently, Kamenski, Huang, and Xu~\cite{KamHuaXu13} derived a similar bound for second-order elliptic PDEs which is valid for arbitrary meshes (i.e.,\ without imposing any conditions on the mesh regularity) for any $d$;
the established bound for the condition number depends on three factors, one representing the condition number of the linear finite element equations for the Laplace operator on a uniform mesh and the other factors arising from the nonuniformity of the mesh viewed in the metric specified by the inverse of the diffusion matrix $\D$ (\emph{$\D^{-1}$-nonuniformity}) and the mesh nonuniformity in volume measured in the Euclidean metric (\emph{volume-nonuniformity}).
Further, it was shown in~\cite{KamHuaXu13} that the Jacobi preconditioning --- an optimal diagonal scaling for a symmetric positive definite sparse matrix --- eliminates the effect of the mesh volume-nonuniformity and reduces the effect of the mesh $\D^{-1}$-nonuniformity.
This result can be seen as a further generalization of~\cite{BanSco89,AinMcLTra99,GraMcL06} towards arbitrary anisotropic meshes and general diffusion coefficients.

In the second approach (hereafter referred to as \emph{the density function approach}), a lower bound on the smallest eigenvalue of the stiffness matrix is obtained through a lower bound of the smallest eigenvalue of a continuous generalized eigenvalue problem involving an auxiliary density function.
It is first employed by Fried~\cite{Fri73} for the Laplace operator.
Fried shows how to obtain the lower bound for the continuous problem for spherical domains and claims via a physical argument that the bound is also valid for general domains.
The obtained bound for the smallest eigenvalue of the stiffness matrix is valid for any mesh in any dimension but in three and higher dimensions it is less sharp than bounds obtained in~\cite{GraMcL06,KamHuaXu13}: Fried's bound is proportional to power $(d-2)/d$ of the volume of the smallest mesh element whereas those in~\cite{GraMcL06,KamHuaXu13} are proportional to an average of the volumes of all elements (cf.~\eqref{eq:KamHuaXu13} and~\eqref{eq:lMin:A-Fried}).

In this paper we investigate the density function approach to develop bounds on the smallest eigenvalue and the condition number of both the stiffness matrix and the Jacobi preconditioned stiffness matrix for the linear finite element approximation of a general diffusion problem with arbitrary nonuniform meshes.
We show that the approach yields a bound on the smallest eigenvalue that is much sharper than in the original work by Fried~\cite{Fri73}.
Moreover, the new results are even sharper than those obtained in~\cite{KamHuaXu13} in one and two dimensions and comparable in three and higher dimensions. 
In particular, they involve a factor describing the distance of a given element to the boundary and reflect the fact that the mesh concentration near the boundary has less influence on the condition number than the mesh concentration in the interior of the domain.
This is especially true for the Jacobi preconditioned system where the former has little or almost no influence on the condition number.

The outline of the paper is as follows.
The linear finite element approximation of the general diffusion problem is described in Sect.~\ref{sect:FE}.
The lower bound for the smallest eigenvalue of the stiffness matrix is developed in Sect.~\ref{sect:lambdaMinRho}, followed by the derivation of the bounds on the condition number in Sect.~\ref{sect:condition}.
Section~\ref{sect:condition} also contains comparison of the obtained bounds to those in~\cite{Fri73} and~\cite{KamHuaXu13}.
Numerical examples are presented in Sect.~\ref{sect:num}.
The conclusions are drawn in Sect.~\ref{SEC:conclusion}.

\section{Finite element approximation}\label{sect:FE}

We consider the boundary value problem (BVP) of a prototype anisotropic diffusion equation in the form
\begin{equation}
   \begin{cases}
      - \nabla \cdot \left( \D \nabla u \right) = f, & \text{in $\Omega$}, \\
      u = 0,                            & \text{on $\partial\Omega$},
   \end{cases}
   \label{eq:bvp1}
\end{equation}
where $\Omega$ is a simply connected polygonal or polyhedral domain in $\R^d$ ($d \geq 1$) and $\D=\D(\bx)$ is the diffusion matrix.
We assume that $\D$ is symmetric and positive definite and there exist two positive constants $d_{\min}$ and $d_{\max}$ such that
\begin{equation}
   d_{\min} I \leq \D(\bx) \leq d_{\max} I, \qquad \forall \bx \in \Omega, 
   \label{eq:D:1}
\end{equation}
where $I$ is the $d\times d$ identity matrix and the less-than-or-equal sign means that the difference between the right-hand side and left-hand side terms is positive semidefinite.
The weak formulation of the BVP~\eqref{eq:bvp1} is to find $u \in V \equiv H_0^1(\Omega)$ such that
\begin{equation}
   \mathcal{A}(u,v) = \mathcal{F}(v), \qquad \forall v \in V ,
   \label{eq:vp}
\end{equation}
where
\[
   \mathcal{A}(u,v) =  \int_{\Omega} \nabla v \cdot \D \nabla u \dx
   \qquad \text{and} \qquad 
   \mathcal{F}(v) = \int_{\Omega} f v \dx .
\]

Let an affine family of simplicial meshes $\left\{\cT_h\right\}$ for $\Omega$ be given.
For any element $K \in \cT_h$, let $F_K \colon \hat{K} \to K$ be the affine mapping from the reference element $\hat{K}$ to the mesh element $K$ and $F_K'$ the Jacobian matrix of $F_K$.
For notational simplicity, we assume that $\abs{\hat{K}} = 1$.
Note that $F_K'$ is constant on $K$.

Denote the linear finite element space associated with mesh $\mathcal{T}_h$ by $V^h \subset V$.
Then, a linear finite element solution $u_h \in V^h$ of BVP~\eqref{eq:bvp1} is defined by
\begin{equation}
   \mathcal{A}(u_h,v_h) = \mathcal{F}(v_h), \qquad \forall v_h \in V^h .
   \label{eq:fem}
\end{equation}
Let $N$ and $N_{vi}$ be the numbers of the elements and the interior vertices of $\cT_h$, respectively.
If we order the vertices in such a way that the first $N_{vi}$ vertices are interior vertices and denote by $\phi_j$ the standard linear basis function associated with the $j^{\text{th}}$ vertex, then we can express $V^h$ and $u_h$ as
\begin{equation}
   V^h = \spanM \lbrace \phi_1, \dotsc, \phi_{N_{vi}} \rbrace 
   \qquad \text{and} \qquad
   u_h = \sum\limits_{j=1}^{N_{vi}} u_j \phi_j  .
   \label{uh-1}
\end{equation}
In the following we will use the function form $u_h = \sum_{j} u_j \phi_j$ and the vector form $\bu = {[u_1,\dotsc, u_{N_{vi}}]}^T$ synonymously.
Using this, we can write~\eqref{eq:fem} in a matrix form as
\[
   A \bu = \V{f},
\]
where the stiffness matrix $A$ and the right-hand side term $\V{f}$ are given by
\begin{align}
   A_{ij} &= \mathcal{A}(\phi_j,\phi_i)  = \sum_{K \in \cT_h} \Abs{K} \nabla\phi_i \cdot \D_K \nabla \phi_j,
   &  i, j &= 1,\dotsc, N_{vi}  ,
   \label{eq:Aij} \\ 
   f_i &= \sum_{K \in \cT_h}  \int_{K} f \phi_i \dx, 
      &  i &= 1,\dotsc, N_{vi} ,
      \notag
\end{align}
and $\D_K$ is the average of $\D(\bx)$ over $K$, i.e., 
\[
   \D_K = \frac{1}{\Abs{K}} \int_K \D(\bx) \dx .
\]
We are interested in the condition numbers of matrices $A$ and $\SAS$, where $S = \sqrt{ \diag(A) }$ is the Jacobi preconditioner.

\section{Smallest eigenvalue of the stiffness matrix}\label{sect:lambdaMinRho}

In this section we develop lower bounds for $\lambda_{\min}(A)$ and $\lambda_{\min}(\SAS )$ using the density function approach proposed by Fried~\cite{Fri73}, where $S = \sqrt{\diag (A)}$ is the Jacobi preconditioner.
The approach utilizes an auxiliary eigenvalue problem
\begin{align}
   \begin{cases}
      -\Delta  u = \lambda \rho u,  & \text{in $\Omega$}, \\
      u = 0,                        &\text{on $\partial\Omega$},
   \end{cases}
   \label{CEVP}
\end{align}
where $\rho$ is a density function (to be chosen) satisfying
\begin{align}
   0 < \rho_{\min} \leq \rho \leq \rho_{\max} < \infty
   \qquad \text{and}
   \qquad \int\limits_{\Omega} \rho(\bx) \dx = 1 .
   \label{eq:rhoConditions}
\end{align}
The corresponding Galerkin formulation reads as
\[
   (\nabla u,\nabla v) = \lambda (\rho u, v),
   \qquad \forall v \in H_0^1(\Omega).
\]
Let $\lambda_\rho$ be the smallest eigenvalue of this eigenvalue problem and $\lambda_{\min}(B_\rho)$ the smallest eigenvalue of the Galerkin mass matrix $B_\rho$ associated with the density $\rho$.
Then, for any vector $\bu$, 
\begin{align*}
  \bu^T A \bu & = (\D \nabla u_h, \nabla u_h) \\
    &\ge d_{\min} (\nabla u_h, \nabla u_h) \\
    &\geq d_{\min} \lambda_\rho \left( \rho u_h, u_h \right)  \\
 &\geq d_{\min} \lambda_\rho \lambda_{\min}(B_\rho) \Norm{\bu}_2^2 ,
\end{align*}
which leads to
\begin{equation}
   \lambda_{\min}(A) \geq d_{\min} \lambda_\rho \lambda_{\min}(B_\rho).
   \label{eq:lminA:general}
\end{equation}
Thus, the key to the approach is to estimate $\lambda_\rho$ and choose the density function so that the lower bound~\eqref{eq:lminA:general} is as large as possible.

Hereafter, we assume that $\rho$ is piecewise constant, i.e.,  $\rho\vert_K = \rho_K = const.$ for all $K \in \cT_h$.
We denote
\[
   \Krho = \rho_K \Abs{K} ,
   \qquad \Wrho = \sum_{K \in \omega_j} \Krho ,
   \qquad \WrhoMin = \min\limits_{j=1, \dotsc,N_{vi}}  \Wrho .
\]
It is known~\cite[Sect.~3]{KamHuaXu13} that
\begin{equation}
   \lambda_{\min}(B_\rho)
   \geq \frac{\WrhoMin}{(d+1)(d+2)}.
   \label{eq:lminB}
\end{equation}
Inserting this into~\eqref{eq:lminA:general} yields
\begin{equation}
   \lambda_{\min}( A ) 
   \geq \frac{d_{\min} \WrhoMin}{(d+1)(d+2)} \lambda_\rho .
   \label{eq:lminA:general:2}
\end{equation}

\subsection{Estimation of the smallest eigenvalue of the continuous eigenvalue problem}

We now derive a lower bound on $\lambda_\rho$ by means of a Green's function.
The derivation consists of three lemmas, with the bound being given in Lemma~\ref{lem:lambda:min:Delta}.

First, we need the property of the strict positivity (or negativity) of eigenfunctions associated with the smallest eigenvalue of~\eqref{CEVP}.

\begin{lemma}
\label{lem:1}
For any density distribution $\rho$ satisfying~\eqref{eq:rhoConditions}, the smallest eigenvalue $\lambda_\rho$ of the eigenvalue problem~\eqref{CEVP} is simple and positive. Any corresponding eigenfunction is either strictly positive or strictly negative in $\Omega$.
\end{lemma}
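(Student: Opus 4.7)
The plan is to combine a variational characterization of $\lambda_\rho$ with the strong maximum principle for weak solutions, following the classical Krein--Rutman style argument for the first eigenvalue of a positive elliptic operator.

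First, I would recast \eqref{CEVP} variationally as
\[
   \lambda_\rho = \inf \left\{ \int_\Omega \abs{\nabla v}^2 \dx
   \;:\; v \in H_0^1(\Omega),\; \int_\Omega \rho v^2 \dx = 1 \right\} .
\]
Positivity of $\lambda_\rho$ follows from the Poincar\'e inequality together with the upper bound $\rho \leq \rho_{\max}$ from \eqref{eq:rhoConditions}, which give $\lambda_\rho \geq C_P(\Omega) / \rho_{\max} > 0$. Existence of a minimizer $u \in H_0^1(\Omega)$ follows from the direct method: any minimizing sequence is bounded in $H_0^1(\Omega)$, a weakly convergent subsequence has a strong $L^2$-limit by compact embedding, and $\rho \in L^\infty(\Omega)$ is enough to pass to the limit in the normalization.

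Next, I would prove that \emph{any} eigenfunction $u$ for $\lambda_\rho$ is of strict constant sign. The key fact is that $\abs{u} \in H_0^1(\Omega)$ with $\nabla \abs{u}$ coinciding pointwise a.e.\ with $\pm \nabla u$, so $\abs{u}$ has the same Rayleigh quotient as $u$; hence $\abs{u}$ is itself a minimizer and therefore a nonnegative weak solution of
\[
   -\Delta \abs{u} = \lambda_\rho \, \rho \, \abs{u} \quad \text{in } \Omega.
\]
Since $\lambda_\rho > 0$ and $\rho > 0$, the right-hand side is nonnegative, so the strong maximum principle for weak $H_0^1$-solutions of second-order elliptic equations (as in Gilbarg--Trudinger) forces either $\abs{u} \equiv 0$ or $\abs{u} > 0$ throughout $\Omega$. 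The former is excluded because $u$ is nontrivial; combined with the interior H\"older continuity of $u$, this gives that $u$ is strictly positive or strictly negative in $\Omega$.

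Simplicity then falls out as a corollary. Given two eigenfunctions $u_1$, $u_2$ for $\lambda_\rho$, make both strictly positive in $\Omega$ by the previous step. Pick any $x_0 \in \Omega$ and set $t = u_1(x_0)/u_2(x_0) > 0$; the function $v := u_1 - t\, u_2$ is again an eigenfunction for $\lambda_\rho$ and satisfies $v(x_0) = 0$. Applying the preceding paragraph to $v$, either $v$ is of strict constant sign (impossible in view of $v(x_0) = 0$) or $v \equiv 0$; only the second option remains, so $u_1$ and $u_2$ are linearly dependent. The main technical obstacle I anticipate is citing a clean version of the strong maximum principle that applies to weak $H_0^1$-solutions on a possibly non-smooth polygonal/polyhedral $\Omega$ with $L^\infty$ coefficient. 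The argument has been arranged so that only the \emph{interior} maximum principle is needed, which sidesteps any boundary regularity issue at corners of $\partial\Omega$.
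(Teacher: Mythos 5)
Your proposal is correct and follows essentially the same route as the paper: the paper proves positivity of $\lambda_\rho$ by the identical Poincar\'e/Rayleigh-quotient argument and then defers simplicity and the strict sign property to the classical argument for the unweighted problem (Gilbarg--Trudinger, Theorem~8.38), which is precisely the $\abs{u}$-is-also-a-minimizer plus strong maximum principle plus linear-combination argument you spell out. You have simply written out in full the details the paper cites by reference.
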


\begin{proof}
The positiveness of $\lambda_\rho$ follows from
\[
   \lambda_\rho 
      = \min_{u \neq 0} \frac{(\nabla u,\nabla u)}{(\rho u, u)} 
      \geq \min_{u \neq 0} \frac{C_P \Norm{u}_2^2}{\rho_{\max} \Norm{u}_2^2}
      = \frac{C_P}{\rho_{\max}} > 0,
\]
where $C_P$ is the constant associated with the Poincar\'e's inequality.
The rest of the proof is the same as for the standard eigenvalue problem with $\rho \equiv 1$ (e.g.,\ see~\cite[Theorem~8.38]{GilTru01}), except that $\frac{(\nabla u,\nabla u)}{(u, u)}$ is replaced by $\frac{(\nabla u,\nabla u)}{(\rho u, u)}$.
\qquad
\end{proof}


\begin{lemma}
\label{lem:Green}
Let $G(\bx,\bxi)$ be the Green's function for $-\Delta$ subject to a homogeneous Dirichlet boundary condition on $\Omega$ and $d(\bx)$ the distance from $\bx$ to the boundary $\partial \Omega$, i.e., $d(\bx) = \min\limits_{\V{y} \in \partial \Omega} \Abs{\bx-\V{y}}$.
Then,
\begin{equation}
   0 \le G(\bx,\bxi) \leq C \times
      \begin{cases}
         \sqrt{d(\bx) d(\bxi)}, & \text{for $d = 1$}, \\
         \ln \left( 1 + \frac{ d(\bx) d(\bxi)}{\Abs{\bx-\bxi}^2 } \right),
            & \text{for $d = 2$}, \\
         \Abs{\bx-\bxi}^{2-d} \min \left\{ 1,  \frac{ d(\bx) d(\bxi)}{\Abs{\bx-\bxi}^2 } \right\},
            & \text{for $d\geq3$}.
      \end{cases}
   \label{eq:greens:function}
\end{equation}
\end{lemma}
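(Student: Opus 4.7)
The plan is to treat the nonnegativity $G \ge 0$ and the pointwise upper bound separately, handling $d=1$ by an explicit formula and $d \ge 2$ by comparison with the free-space fundamental solution.

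For nonnegativity, fix $\bxi \in \Omega$ and consider $G(\cdot,\bxi) + \epsilon$. This function is harmonic on $\Omega \setminus B_\delta(\bxi)$, equals $\epsilon > 0$ on $\partial\Omega$, and, for $\delta$ small enough, is bounded below by $\epsilon$ on $\partial B_\delta(\bxi)$ thanks to the positive (logarithmic or $\Abs{\cdot - \bxi}^{2-d}$) singularity of $G$ at $\bxi$. The weak maximum principle gives $G + \epsilon \ge 0$ off the singularity, and sending $\delta, \epsilon \to 0$ yields $G \ge 0$. For $d = 1$, the explicit representation $G(\bx,\bxi) = L^{-1}\min(\bx,\bxi)(L - \max(\bx,\bxi))$ on $\Omega = (0,L)$, together with the AM--GM inequality, reduces the bound to a short case check on the signs of $\bx - L/2$ and $\bxi - L/2$ and gives the claim directly.

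For $d \ge 2$, I would split the analysis according to whether $\Abs{\bx - \bxi}$ is small or large compared with $\min\{d(\bx), d(\bxi)\}$. In the \emph{interior regime}, comparing $G(\cdot,\bxi)$ with the free-space fundamental solution $\Gamma(\cdot - \bxi)$ via the maximum principle (the difference is harmonic on $\Omega$ and controlled on $\partial\Omega$ by $\sup_{\partial\Omega} \Gamma(\cdot - \bxi)$) yields the bare size bound $G(\bx,\bxi) \le C \Abs{\bx - \bxi}^{2-d}$ for $d \ge 3$ and $G(\bx,\bxi) \le C(1 - \ln \Abs{\bx-\bxi})$ for $d = 2$. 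In the \emph{boundary regime}, $G(\cdot,\bxi)$ vanishes on $\partial\Omega$ and is harmonic in a neighborhood of the boundary point nearest to $\bx$; a Hopf-type barrier combined with interior gradient estimates produces a linear decay factor $d(\bx)$, and applying the same estimate on the $\bxi$ side via the symmetry $G(\bx,\bxi) = G(\bxi,\bx)$ introduces the factor $d(\bxi)$. Matching the two regimes produces the stated trichotomy.

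The main obstacle is keeping the constant $C$ uniform in $\bx, \bxi$ on a polygonal or polyhedral---hence merely Lipschitz---domain, where standard $C^2$ barrier arguments are not directly available. The cleanest way to close the proof is to appeal to the classical pointwise Green's function estimates for $-\Delta$ on bounded Lipschitz domains, which yield exactly the three stated inequalities; verifying that a simply connected polygonal or polyhedral $\Omega$ satisfies their Lipschitz hypothesis is routine.
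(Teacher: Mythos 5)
The paper does not actually prove this lemma: its entire ``proof'' is a pointer to four references for the classical pointwise Green's function estimates. Your proposal ends in the same place --- an appeal to that literature --- but adds a genuine sketch of the mechanism behind the bounds: maximum-principle comparison with the fundamental solution for the bare size bound $G(\bx,\bxi)\le C\Abs{\bx-\bxi}^{2-d}$ (logarithmic in 2D), barriers plus the symmetry $G(\bx,\bxi)=G(\bxi,\bx)$ for the two boundary-decay factors, and the explicit formula in 1D. The nonnegativity argument and the 1D computation are correct and complete; for the latter, with $x\le\xi$ one has $x(L-\xi)\le\sqrt{x(L-x)}\sqrt{\xi(L-\xi)}\le L\sqrt{d(x)d(\xi)}$, so the bound even holds with $C=1$.

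The one step that does not close is your final appeal: you claim the three inequalities are classical for bounded \emph{Lipschitz} domains and that verifying the Lipschitz hypothesis for a polygon or polyhedron is routine. Lipschitz regularity is not the right hypothesis. The linear boundary-decay factor $d(\bx)d(\bxi)$ comes from exterior-ball (half-space) barriers and is a theorem for convex or $C^{1,1}$ domains; on a merely Lipschitz domain it can fail. Concretely, near a reentrant corner of interior angle $\omega>\pi$ in a 2D polygon, $G(\cdot,\bxi)$ with $\bxi$ fixed away from the corner behaves like $d(\bx)^{\pi/\omega}$ with $\pi/\omega<1$, which is asymptotically larger than the $\mathcal{O}(d(\bx))$ that the stated bound $C\ln\bigl(1+d(\bx)d(\bxi)/\Abs{\bx-\bxi}^2\bigr)$ would force there; your Hopf-type barrier also breaks down at such a corner because there is no supporting half-space. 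To close the argument you must either restrict to convex polygonal/polyhedral domains (where the half-space comparison gives a uniform constant directly) or invoke estimates proved for the specific domain class the lemma allows, with the corner-dependent exponent made explicit. This is precisely the verification the paper sweeps into its citation, so your proposal is no weaker than the paper's own treatment --- but the word ``routine'' in your last sentence is where the actual difficulty lives.
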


\begin{proof}
For example, see~\cite{AgDoNi59, DalSwe04, GruWid82, IfrRia05}.
\qquad
\end{proof}


\begin{lemma}
\label{lem:lambda:min:Delta}
Assume that $\rho$ is piecewise constant and $p \in \left(1, \frac{d}{d-2}\right)$.
Let $d_K = \max_{\bx \in K} d(\bx)$.
Then the smallest eigenvalue $\lambda_\rho$ of the problem~\eqref{CEVP} is bounded from below by
\begin{equation}
   \lambda_\rho \geq C \times
      \begin{cases}
         {\left( \sum\limits_K \Krho d_K \right)}^{-1},
            & \text{for $d = 1$}, \\
         {\left( 1 + \sum\limits_{K} \Krho
            \ln^2 \left( 1 +  d_K \rho_{\max} \right) \right)}^{- \frac{1}{2}},
            & \text{for $d = 2$}, \\
        { {\left(\frac{d}{d-2} - p\right)}^{\frac{d}{d+2 p}} 
            \left( \sum\limits_K \Krho^{\frac{p}{p-1}} 
               \Abs{K}^{-\frac{1}{p-1}} d_K^{ \frac{p}{p-1} \cdot \frac{d-(d-2)p}{d + 2p}} \right)}^{-\frac{p-1}{p}},
         & \text{for $d\geq3$} . \\
      \end{cases}
   \label{eq:lambda:min:Delta}
\end{equation}
\end{lemma}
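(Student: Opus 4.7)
The plan is to combine the Green's function representation of $u_\rho$ with a dimension-adapted integral inequality. By Lemma~\ref{lem:1} we may take a strictly positive eigenfunction $u_\rho$ for $\lambda_\rho$, and inverting $-\Delta$ on both sides of \eqref{CEVP} gives the integral identity
\[
u_\rho(\bx) = \lambda_\rho \int_\Omega G(\bx,\bxi)\,\rho(\bxi)\,u_\rho(\bxi)\,d\bxi .
\]
In each dimension I would pick a norm so that, after inserting the bound of Lemma~\ref{lem:Green}, a common power of $u_\rho$ appears on both sides and can be cancelled to read off $\lambda_\rho$.

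For $d=1$, divide the identity by $\sqrt{d(\bx)}$ and use $G\le C\sqrt{d(\bx)d(\bxi)}$ to obtain $u_\rho(\bx)/\sqrt{d(\bx)} \le C\lambda_\rho\int d(\bxi)\rho(\bxi)\bigl(u_\rho(\bxi)/\sqrt{d(\bxi)}\bigr)d\bxi$; taking the supremum of the weighted quantity $u_\rho/\sqrt{d}$ and using $d(\bxi)\le d_K$ on $K$ gives $\lambda_\rho\ge C/\sum_K|K|_\rho d_K$ after cancellation. For $d=2$, apply Cauchy--Schwarz in $\bxi$, multiply by $\rho(\bx)$, and integrate to obtain $\lambda_\rho\ge\bigl(\int_\Omega\!\int_\Omega G^2\rho(\bx)\rho(\bxi)\,d\bx\,d\bxi\bigr)^{-1/2}$. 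Bounding the double integral with $G^2\le C\ln^2(1+d(\bx)d(\bxi)/|\bx-\bxi|^2)$ then requires, for fixed $\bx$, splitting the $\bxi$-integration into the regions $|\bx-\bxi|^2 \le d(\bx)d(\bxi)$ and $|\bx-\bxi|^2 > d(\bx)d(\bxi)$ and evaluating the near-diagonal logarithmic contribution in polar coordinates (via the substitution $s=r^2$); the far region contributes the additive constant, while the near region contributes $C\ln^2(1+d(\bx)\rho_{\max})$ after using $\rho\le\rho_{\max}$. Integrating against $\rho(\bx)$ and using $d(\bx)\le d_K$ on $K$ produces the claimed sum.

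For $d\ge 3$, use the interpolation $\min\{1,t\}\le t^\alpha$ for $\alpha\in[0,1]$ to write $G\le C|\bx-\bxi|^{2-d-2\alpha}(d(\bx)d(\bxi))^\alpha$, and apply Hölder's inequality with exponents $p$ and $p':=p/(p-1)$ in the $\bxi$-integral to separate the singular kernel from the density:
\[
u_\rho(\bx) \le C\lambda_\rho d(\bx)^\alpha \Bigl(\int_\Omega|\bx-\bxi|^{(2-d-2\alpha)p}d\bxi\Bigr)^{1/p} \Bigl(\int_\Omega d(\bxi)^{\alpha p'}\rho(\bxi)^{p'}u_\rho(\bxi)^{p'}d\bxi\Bigr)^{1/p'} .
\]
The first factor is a polar-coordinate integral of $r^{d-1+(2-d-2\alpha)p}$ over a ball, finite precisely when $2\alpha p<\mu:=d-(d-2)p$ (which forces $p<d/(d-2)$) and equal to a constant multiple of $(\mu-2\alpha p)^{-1/p}$. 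Bounding $u_\rho\le\|u_\rho\|_\infty$ in the second factor, taking $\sup_\bx$, cancelling $\|u_\rho\|_\infty$, and writing $\rho_K^{p'}|K|=\Krho^{p'}|K|^{1-p'}$ together with $d(\bxi)\le d_K$ yields
\[
\lambda_\rho \ge C(\mu-2\alpha p)^{1/p} \Bigl(\sum_K\Krho^{p'}|K|^{1-p'}d_K^{\alpha p'}\Bigr)^{-1/p'} .
\]
Finally, choosing $\alpha=\mu/(d+2p)\in(0,\mu/(2p))$ reproduces the claimed $d_K$-exponent $\alpha p' = (p/(p-1))(d-(d-2)p)/(d+2p)$; tracking how the diameter of $\Omega$ enters the first integral and combining it with $(\mu-2\alpha p)^{1/p}=(\mu d/(d+2p))^{1/p}$ yields the stated prefactor $((d/(d-2))-p)^{d/(d+2p)}$.

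The main obstacle is the $d\ge3$ case: one must verify that the specific choice $\alpha=\mu/(d+2p)$ balances the loss $d(\bx)^\alpha\le\mathrm{diam}(\Omega)^\alpha$ against the blow-up of the singular integral as $\alpha\uparrow\mu/(2p)$, so as to convert the naive exponent $1/p$ on $\mu$ into the sharp exponent $d/(d+2p)$. The 2D logarithmic splitting, though delicate because of the additive "$1+$" term arising from the far field, follows a standard template and is the secondary technical difficulty.
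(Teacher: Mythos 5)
Your overall architecture (Green's representation of $u_\rho$, then a duality/Hölder argument in a dimension-dependent norm) is the same as the paper's, and your 1D argument is a correct sup-norm variant of the paper's weighted-$L^2$ computation. However, the 2D near-field estimate fails as stated. If you split at $\Abs{\bx-\bxi}^2=d(\bx)d(\bxi)$ and bound $\rho\le\rho_{\max}$ on the near region, then after the substitution $s=r^2$ the near contribution is of order $\rho_{\max}\int_0^{d(\bx)d(\bxi)}\ln^2\bigl(1+d(\bx)d(\bxi)/s\bigr)\,ds\sim \rho_{\max}\,d(\bx)\,h_\Omega$, i.e.\ \emph{linear} in $\rho_{\max}$, not $C\ln^2(1+d(\bx)\rho_{\max})$; the resulting bound $\lambda_\rho\ge C\bigl(1+h_\Omega\rho_{\max}\sum_K\Krho d_K\bigr)^{-1/2}$ is strictly weaker than the claim (compare $\rho_{\max}d_K$ with $\ln^2(1+\rho_{\max}d_K)$ for small $\Abs{K_{\min}}$). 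The missing idea is to split at the \emph{fixed} radius $\varepsilon=\rho_{\max}^{-1/2}$, chosen to balance the two pieces: outside $B_\varepsilon(\bx)$ the integrand is at most $\ln^2(1+d(\bx)h_\Omega\rho_{\max})$ and $\int_\Omega\rho\,\dxi=1$, while inside one gets $\rho_{\max}\int_0^\varepsilon r\ln^2(1+d(\bx)h_\Omega/r^2)\,dr\le C\rho_{\max}\varepsilon^2\bigl[1+\ln^2(1+d(\bx)h_\Omega/\varepsilon^2)\bigr]=C\bigl[1+\ln^2(1+d(\bx)h_\Omega\rho_{\max})\bigr]$.

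For $d\ge3$ your interpolation $\min\{1,t\}\le t^{\alpha}$ with $\alpha=\mu/(d+2p)$, $\mu=d-(d-2)p$, correctly reproduces the mesh sum and the $d_K$-exponent, but the prefactor it actually delivers is $(\mu-2\alpha p)^{1/p}=(\mu d/(d+2p))^{1/p}\sim\bigl(\tfrac{d}{d-2}-p\bigr)^{1/p}$, not $\bigl(\tfrac{d}{d-2}-p\bigr)^{d/(d+2p)}$: once $\alpha$ is pinned down by the required $d_K$-exponent $\alpha p/(p-1)$, there is no remaining freedom, and $1/p=d/(d+2p)$ only at the endpoint $p=d/(d-2)$. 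So the last sentence of your third paragraph is not substantiated; you prove a bound of the stated form but with a different (weaker, for $\mu<1$) power of $\tfrac{d}{d-2}-p$. The paper obtains the stated exponent by splitting $\int_\Omega G(\bx,\bxi)^p\,\dxi$ into $B_\varepsilon(\bx)$ and its complement and optimizing $\varepsilon$, which gives $\int_\Omega G^p\,\dxi\le C\mu^{-dp/(d+2p)}d(\bx)^{p\mu/(d+2p)}$. Finally, your cancellation of $\Norm{u_\rho}_\infty$ silently assumes $u_\rho\in L^\infty$; this is true but needs a word (the paper avoids it by keeping $\int_\Omega\rho^qu_\rho^q\,\dx$ on both sides and cancelling that instead). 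In short: the $d=1$ case is fine, the $d\ge3$ case is fixable up to the prefactor, but the $d=2$ case as proposed does not prove the lemma.
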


\begin{proof}
For a positive eigenfunction  $u_\rho$ associated with $\lambda_\rho$ we have
\begin{equation}
   u_\rho(\bx) = \lambda_\rho \int_\Omega G(\bx,\bxi) \rho(\bxi) u_\rho(\bxi) \dxi.
   \label{green-1}
\end{equation}

For $ 1\le d\leq 2$, applying the Cauchy-Schwarz inequality to~\eqref{green-1},
\[
   u_\rho(\bx)  \le \lambda_\rho {\left( \int_\Omega \rho(\bxi) {G(\bx,\bxi)}^2 \dxi \right)}^{\frac 1 2}
   {\left( \int_\Omega \rho(\bxi) {u_\rho(\bxi)}^2 \dxi \right)}^{\frac 1 2}.
\]
Taking square, multiplying with $\rho(\bx)$, and integrating both sides over $\Omega$ gives
\[
   \int_\Omega \rho(\bx) {u_\rho(\bx)}^2 \dx 
   \le \lambda_\rho^2 \int_\Omega  \rho(\bx) {\left[ \int_\Omega \rho(\bxi) {G(\bx,\bxi)}^2 \dxi \right]} \dx
   \cdot \int_\Omega \rho(\bxi) {u_\rho(\bxi)}^2 \dxi .
\]
This yields
\begin{equation}
   \lambda_\rho \ge {\left( \int_\Omega  \rho(\bx) {\left[ \int_\Omega \rho(\bxi) {G(\bx,\bxi)}^2 \dxi \right]} \dx \right)}^{-\frac 1 2} .
   \label{green-2}
\end{equation}

For $d=1$, using this and Lemma~\ref{lem:Green},
\begin{align*}
   \lambda_\rho
   \geq C
       {\left( \int_\Omega  \rho(\bx) {\left[ \int_\Omega \rho(\bxi) d(\bx)d(\bxi) \dxi \right]} \dx \right)}^{-\frac 1 2}
   = C {\left(\int_\Omega \rho(\bx)  d(\bx) \dx \right)}^{-1}
   \geq C {\left(\sum_K \Krho d_K \right)}^{-1} ,
\end{align*}
which gives~\eqref{eq:lambda:min:Delta} for $d=1$.

In 2D, from Lemma~\ref{lem:Green} we have
\begin{equation*}
\int_\Omega \rho(\bxi) {G(\bx, \bxi)}^2 \dxi 
 \le C \int_\Omega \rho(\bxi) \ln^2 \left( 1 + \frac{ d(\bx) d(\bxi)}{\Abs{\bx-\bxi}^2 } \right) \dxi
 \le C \int_\Omega \rho(\bxi) \ln^2 \left( 1 + \frac{ d(\bx) h_\Omega }{\Abs{\bx-\bxi}^2 } \right) \dxi ,
\end{equation*}
where $h_\Omega$ denotes the diameter of $\Omega$.
Let $B_\varepsilon (\bx )$ be the disk with center $\bx$ and radius $\varepsilon$.
Then, using property~\eqref{eq:rhoConditions} we have
\begin{align*}
   \int_\Omega  \rho(\bxi) {G(\bx, \bxi)}^2 \dxi 
      &\le C \int_{\Omega\backslash B_\varepsilon (\bx )}  
         \rho(\bxi) \ln^2 \left( 1 + \frac{ d(\bx) h_\Omega }{\varepsilon^2 } \right) \dxi 
       + C \int_{B_\varepsilon (\bx )}  
            \rho_{\max} \ln^2 \left( 1 + \frac{ d(\bx) h_\Omega }{\Abs{\bx-\bxi}^2 } \right) \dxi
      \\
      &\le C \ln^2 \left( 1 + \frac{ d(\bx) h_\Omega }{\varepsilon^2 } \right)
         + C \rho_{\max}  \int_0^{\varepsilon} r 
            \ln^2 \left( 1 + \frac{ d(\bx) h_\Omega }{r^2 } \right) d r 
      \\
      &\le C \ln^2 \left( 1 + \frac{ d(\bx) h_\Omega }{\varepsilon^2 } \right)
         + C \rho_{\max}  \varepsilon^2  
         \left[ 1 + \ln^2 \left( 1 + \frac{ d(\bx) h_\Omega }{\varepsilon^2 } \right)\right] .
\end{align*}
Taking $\varepsilon = \rho_{\max}^{-\frac 1 2}$, we get
\[
\int_\Omega \rho(\bxi)  {G(\bx, \bxi)}^2 \dxi \le C \left[ 1 + \ln^2 \bigl( 1 + d(\bx) h_\Omega \rho_{\max} \bigr)\right] .
\]
Inserting this into~\eqref{green-2}, using~\eqref{eq:rhoConditions} and the fact that $d(\bx) \le d_K$ for all $\bx \in K$, we have
\[
\lambda_\rho  \ge C {\left( 1 + \sum_{K} \Krho  \ln^2 \bigl( 1 + d_K h_\Omega \rho_{\max}\bigr)
\right)}^{-\frac 1 2} .
\]
This gives~\eqref{eq:lambda:min:Delta} for the 2D case upon absorbing $h_\Omega$ in the generic constant $C$.

For $d\geq3$, we need a slightly different version of the estimate~\eqref{green-2}.
In this case, Lemma~\ref{lem:Green} implies that $G(\bx, \cdot ) \in L^p(\Omega)$ for any $\bx \in \Omega$ and
$p \in \left(1, \frac{d}{d-2}\right)$. Using Hölder's inequality,
\[
   u_\rho(\bx) \leq \lambda_\rho 
         {\left( \int_\Omega {G(\bx,\bxi)}^p \dxi  \right)}^{\frac{1}{p}}
         {\left( \int_\Omega {\rho(\bxi)}^q {u_\rho(\bxi)}^q \dxi \right)}^{\frac{1}{q}},
\]
where $q$ satisfies $\frac{1}{p} + \frac{1}{q} = 1$.
By multiplying with $\rho(\bx)$, taking power $q$, and integrating both sides over $\Omega$, we get
\[
   \int_\Omega {\rho(\bx)}^q {u_\rho(\bx)}^q \dx 
      \leq \lambda_\rho^q
         \int_\Omega {\rho(\bx)}^q {\left(\int_\Omega {G(\bx,\bxi)}^p \dxi \right)}^{\frac{q}{p}}\dx 
         \int_\Omega {\rho(\bxi)}^q {u_\rho(\bxi)}^q \dxi 
\]
and therefore
\begin{equation}
   \lambda_\rho
      \geq {\left( \int_\Omega {\rho(\bx)}^q {\left(\int_\Omega {G(\bx,\bxi)}^p \dxi \right)}^{\frac{q}{p}}
       \dx \right)}^{-\frac{1}{q}}.
   \label{green-3}
\end{equation}

With Lemma~\ref{lem:Green} we have, using a similar strategy as in 2D,
\begin{align*}
  \int_\Omega {G(\bx,\bxi)}^p \dxi
      & \le C \int_\Omega \Abs{\bx-\bxi}^{(2-d)p} 
            \min \left\{ 1, \frac{ {d(\bx)}^p h_\Omega^p}{\Abs{\bx-\bxi}^{2p} } \right\} \dxi 
  \\
&\le C \int_{\Omega \cap B_\varepsilon(\bx)}  \Abs{\bx-\bxi}^{(2-d)p} \dxi 
         + C \int_{\Omega \setminus B_\varepsilon(\bx)} \Abs{\bx-\bxi}^{(2-d)p}
               \frac{ {d(\bx)}^p h_\Omega^p}{\Abs{\bx-\bxi}^{2p} } \dxi
\\
&\leq C \int_0^\varepsilon  r^{d-1} r^{(2-d)p} \ dr 
         + C {d(\bx)}^p h_{\Omega}^p \, \varepsilon^{-d p} 
 \\
& \le C \frac{1}{d-(d-2)p} \, \varepsilon^{d-(d-2)p}
         + C {d(\bx)}^p h_{\Omega}^p \, \varepsilon^{- d p} .
\end{align*}
Choosing $\varepsilon$ such that the terms on the right-hand side are equal, we get
\[
   \varepsilon = {\left( (d-(d-2)p) \frac{}{} {d(\bx)}^p h_\Omega^p \right)}^{\frac{1}{d+2p}} 
\]
and
\[
   \int_\Omega {G(\bx,\bxi)}^p \dxi 
      \le C {(d-(d-2)p)}^{-\frac{d p}{d+2 p}} {d(\bx)}^{p \cdot \frac{d-(d-2)p}{d + 2p}} ,
\]
where $h_{\Omega}$ has been absorbed into the generic constant $C$.
Inserting this into~\eqref{green-3} leads to
\begin{align*}
   \lambda_\rho
      &\geq C {\left( \int_\Omega {\rho(\bx)}^q {(d-(d-2)p)}^{-\frac{d q}{d+2p}} 
         {d(\bx)}^{\frac{q (d-(d-2)p)}{d+2p}}
          \dx \right)}^{-\frac{1}{q}} \\
      &\geq C {(d-(d-2)p)}^{\frac{d}{d+2 p}} 
         {\left( \int_\Omega {\rho(\bx)}^q {d(\bx)}^{ \frac{q(d-(d-2)p)}{d+2p}}
          \dx \right)}^{-\frac{1}{q}}\\
   &\geq C {(d-(d-2)p)}^{\frac{d}{d+ 2 p}} 
         {\left( \sum\limits_K \rho_K^q \Abs{K} d_K^{ \frac{q(d-(d-2)p)}{d+2p}}
            \right)}^{-\frac{1}{q}} .
\end{align*}
Since $q = p/(p-1)$ and $\rho$ is element-wise constant, this gives~\eqref{eq:lambda:min:Delta} for $d\geq3$.
\qquad
\end{proof}

\subsection{Smallest eigenvalue of the stiffness matrix}
Having established a lower bound on $\lambda_\rho$, we can now proceed with the estimation of $\lambda_{\min}(A)$ and $\lambda_{\min}(\SAS)$.
Combining~\eqref{eq:lminA:general:2} with Lemma~\ref{lem:lambda:min:Delta}, we have
\begin{equation}
   \lambda_{\min}( A ) 
      \ge C d_{\min} \WrhoMin 
   \times 
      \begin{cases}
          {\left( \sum\limits_K \Krho d_K\right)}^{-1} ,  & \text{for $d = 1$}, \\
        {\left( 1 + \sum\limits_{K} \Krho \ln^2 \left( 1 +  d_K \rho_{\max} \right)
	\right)}^{- \frac 1 2}, & \text{for $d = 2$}, \\
         {\left(\frac{d}{d-2}-p\right)}^{\frac{d}{d+2 p}} 
         {\left( \sum\limits_K \Krho^{\frac{p}{p-1}} \Abs{K}^{-\frac{1}{p-1}} d_K^{\frac{p}{p-1}\cdot\frac{d-(d-2)p}{d+2p}} \right)}^{-\frac{p-1}{p}},
         & \text{for $d\geq3$} . \\
      \end{cases}
\label{eq:A:min:i}
\end{equation}
The density function $\rho$ is arbitrary so far.
The optimal $\rho$ is such that the right-hand side term of~\eqref{eq:A:min:i} attains the maximum value. 
It is difficult, if not impossible, to find the optimal $\rho$ in general.
We follow Fried~\cite{Fri73} to choose $\rho$ such that
\begin{equation}
   \Krho = const., \quad \forall K \in \cT_h .
\label{eq:rho:i:i}
\end{equation}
This gives
\begin{equation}
   \rho(\bx) \vert_K = \frac{1}{N \Abs{K}}
   \label{rho-1}
\end{equation}
and
\begin{equation}
     \Krho = \frac{1}{N} 
     \qquad \text{and} \qquad
     \WrhoMin \geq p_{\min} \Abs{K_{\min}}_\rho = \frac{p_{\min}}{N} \geq \frac{1}{N},
     \label{eq:wrho}
\end{equation}
where $p_{\min}$ is the minimum number of elements in a mesh patch.
Inserting this into~\eqref{eq:A:min:i}, using $\abs{\bar{K}} = \Abs{\Omega} / N$ (the average element volume) and $\rho_{\max} = 1/(N \Abs{K_{\min}})$, we arrive at the following estimate.

\begin{lemma}
\label{lem:lMinA}
The smallest eigenvalue of the stiffness matrix $A$ for the linear finite element approximation of BVP~\eqref{eq:bvp1} is bounded from below by
\begin{equation}
   \lambda_{\min}(A) 
      \geq \frac{C d_{\min}}{N} 
      \times
      \begin{cases}
          {\left( \frac{1}{N} \sum\limits_K d_K \right)}^{-1} ,  & \text{for $d = 1$}, \\
        {\left( 1 + \frac{1}{N} \sum\limits_{K} \ln^2 \left( 1 +  \frac{\abs{\bar{K}}}{\Abs{K_{\min}}} d_K  \right)	\right)}^{- \frac 1 2}, & \text{for $d = 2$}, \\
         {\left(\frac{d}{d-2} - p\right)}^{\frac{d}{d+2 p}} 
         {\left( \frac{1}{N} \sum\limits_K 
            {\left(\frac{\abs{\bar{K}}}{\Abs{K}} \right)}^{\frac{1}{p-1}} d_K^{\frac{p}{p-1}\cdot\frac{d-(d-2)p}{d+2p}}
         \right)}^{- \frac{p-1}{p}} , & \text{for $d\geq3$}.\\
      \end{cases}
   \label{eq:lMin:A}
\end{equation}
\end{lemma}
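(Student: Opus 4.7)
The plan is to specialize the already-derived bound \eqref{eq:A:min:i} by adopting the specific choice of density function $\rho$ prescribed in \eqref{rho-1}, following Fried's idea of equilibrating the local mass so that $\Krho$ becomes constant across elements. With $\rho|_K = 1/(N \abs{K})$, one immediately has $\Krho = 1/N$; by \eqref{eq:wrho}, $\WrhoMin \geq p_{\min}/N \geq 1/N$; the density maximum is $\rho_{\max} = 1/(N \abs{K_{\min}})$; and from the definition of the mean element volume, $1/N = \abs{\bar{K}}/\abs{\Omega}$.

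These identities reduce the three cases of \eqref{eq:A:min:i} to direct substitution. In one dimension $\sum_K \Krho d_K = (1/N)\sum_K d_K$, which combined with the prefactor $C d_{\min} \WrhoMin \geq C d_{\min}/N$ gives the stated 1D bound immediately. In two dimensions the argument of the logarithm becomes $d_K \rho_{\max} = d_K/(N \abs{K_{\min}})$, which equals $(\abs{\bar{K}}/\abs{K_{\min}}) d_K$ up to the fixed scalar $1/\abs{\Omega}$; factoring the constant $\Krho = 1/N$ out of the sum then produces the advertised $(1 + (1/N)\sum_K \ln^2(\cdots))^{-1/2}$ form. For $d \geq 3$, the combination $\Krho^{p/(p-1)} \abs{K}^{-1/(p-1)}$ equals $(1/N)(N \abs{K})^{-1/(p-1)}$, and since $N \abs{K} = \abs{\Omega}\abs{K}/\abs{\bar{K}}$ this reduces to $(1/N)(\abs{\bar{K}}/\abs{K})^{1/(p-1)} \abs{\Omega}^{-1/(p-1)}$; pulling $1/N$ out of the sum and raising to the power $-(p-1)/p$ yields the target factor, while the $(d/(d-2)-p)^{d/(d+2p)}$ prefactor is inherited unchanged from \eqref{eq:A:min:i}.

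The only mildly delicate point is that the domain volume $\abs{\Omega}$ cannot be pulled freely through the logarithm in the 2D case. I would resolve this via the elementary inequality $\ln(1+cs) \leq \ln(1+s) + \abs{\ln c}$ valid for $s \geq 0$ and $c > 0$: after squaring, summing, and normalizing by $N$, the residual constant term combines with the existing ``$1$'' inside the outer parentheses while the multiplicative constant is absorbed into the generic $C$. The scalar $\abs{\Omega}^{1/p}$ produced in the $d \geq 3$ case, together with any remaining domain-dependent powers, is likewise absorbed into $C$ since both $\abs{\Omega}$ and $p$ are fixed. This yields \eqref{eq:lMin:A} in all three regimes.
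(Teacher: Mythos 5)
Your proposal is correct and follows essentially the same route as the paper: substitute the equidistributing density $\rho|_K = 1/(N\abs{K})$ into \eqref{eq:A:min:i}, use $\Krho = 1/N$, $\WrhoMin \geq 1/N$, $\rho_{\max} = 1/(N\Abs{K_{\min}})$, and $\abs{\bar{K}} = \Abs{\Omega}/N$, absorbing domain-dependent constants into $C$. Your explicit treatment of the $\Abs{\Omega}$ factor inside the logarithm via $\ln(1+cs) \leq \ln(1+s) + \abs{\ln c}$ is in fact more careful than the paper, which silently absorbs such constants.
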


\begin{rem}[Optimality of the density function]
For $d\neq2$, the choice of $\rho$ in~\eqref{rho-1} is at least optimal for element-wise constant density functions.
For $d=1$, from~\eqref{eq:A:min:i} we have 
\[
\lambda_{\min}( A )  
   \geq C d_{\min}\frac{\WrhoMin}
               {\sum\limits_K \Krho d_K }
   \geq C d_{\min}
      \frac{ p_{\min} \KrhoMin }
       { \sum\limits_K \Krho d_K }
   =  C d_{\min} 
      \frac{ p_{\min}}
      { \sum\limits_K \frac{\Krho}{\KrhoMin} d_K } .
\]
Maximizing the bound on the right-hand side is equivalent to minimizing the sum $\sum_K \frac{\Krho}{\KrhoMin} d_K $.
Since $\frac{\Krho}{\KrhoMin} \geq 1$ and $d_K > 0$ for all $K$, the optimal choice of $\rho_K$ is such that $\frac{\Krho}{\KrhoMin} = 1$ for all $K$, which is equivalent to~\eqref{eq:rho:i:i}.
The case of $d\geq3$ is similar.
\end{rem}

For the scaled stiffness matrix we have the following result.
\begin{lemma}
\label{lem:lMinSAS}
The smallest eigenvalue of the Jacobi preconditioned stiffness matrix $\SAS$ is bounded from below by
\begin{equation}
    \lambda_{\min}(\SAS )
      \geq \frac{C}{N^{\frac{2}{d}}} 
      \times
      \begin{cases}
          {\left( \frac{1}{N^2} \sum\limits_K \Abs{K} \beta_K d_K \right)}^{-1} ,
            & \text{for $d = 1$}, \\
        {\left( \frac{1}{N} \sum\limits_K \Abs{K} \beta_K \right)}^{-\frac{1}{ 2}} 
        {\left( \frac{1}{N} \sum\limits_{K} \Abs{K} \beta_K 
            \left[ 1 + \ln^2 \left( 1 +  d_K\gamma_h \right)   \right]
	         \right)}^{- \frac 1 2},
         & \text{for $d = 2$}, \\
         {\left(\frac{d}{d-2} - p \right)}^{\frac{d}{d+ 2 p}} 
         {\left( \frac{1}{N^{\frac{2 p}{d(p-1)}}} \sum\limits_K 
            \Abs{K} \beta_K^{\frac{p}{p-1}} d_K^{\frac{p}{p-1}\cdot\frac{d-(d-2)p}{d+2p}}
            \right)}^{- \frac{p-1}{p}} ,
         & \text{for $d\geq3$},\\
      \end{cases}
   \label{eq:lambda:min:SAS}
\end{equation}
where $\beta_K$ and $\gamma_K$, defined as
\begin{align}
   \beta_K & = \frac{1}{d_{\min}} \Norm{\FDF}_2 ,\quad \forall K \in \cT_h ,
      \label{beta-1}\\
   \gamma_h & = \frac{\max_K \beta_K }{\sum\limits_K \Abs{K} \beta_K} ,
      \label{gamma-1}
\end{align}
reflect the non-uniformity of the mesh viewed in the metric specified by $\D^{-1}$ (see also Remark~\ref{rem:Duniformity}).
\end{lemma}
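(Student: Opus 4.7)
The plan is to mimic the derivation of Lemma~\ref{lem:lMinA}, but replace the $\ell^2$-type lower bound on $(\rho u_h,u_h)$ by one in the weighted norm $\bu^T S^2\bu = \sum_j A_{jj}u_j^2$. Since $\lambda_{\min}(\SAS) = \min_{\bu\ne 0} \bu^T A\bu/\bu^T S^2\bu$, the argument leading to~\eqref{eq:lminA:general} still yields $\bu^T A\bu \ge d_{\min}\lambda_\rho(\rho u_h,u_h)$ for any piecewise-constant $\rho$ satisfying~\eqref{eq:rhoConditions}. What is needed is a single density $\rho$ for which $(\rho u_h,u_h) \ge C(d_{\min}Z)^{-1}\bu^T S^2\bu$ with $Z$ small, after which Lemma~\ref{lem:lambda:min:Delta} delivers $\lambda_{\min}(\SAS) \ge C\lambda_\rho/Z$.

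Two elementwise estimates drive the matching. First, writing $\nabla\phi_j = (F_K')^{-T}\hat{\nabla}\hat{\phi}_j$ on $K$ and using definition~\eqref{beta-1} together with the uniform bound on $\|\hat{\nabla}\hat{\phi}_j\|_2$ over the reference simplex,
\[
A_{jj} = \sum_{K\ni v_j}|K|\,\nabla\phi_j\cdot\D_K\nabla\phi_j \;\le\; C\,d_{\min}\sum_{K\ni v_j}|K|\beta_K.
\]
Second, the local mass matrix on a simplex is $\tfrac{|K|}{(d+1)(d+2)}(I+\mathbf{1}\mathbf{1}^T)$ with smallest eigenvalue $|K|/((d+1)(d+2))$, so
\[
(\rho u_h,u_h) = \sum_K \rho_K\,\bu_K^T M_K\bu_K \;\ge\; \frac{1}{(d+1)(d+2)}\sum_j \Wrho\, u_j^2.
\]
The choice that aligns the two bounds is $\rho|_K = \beta_K/Z$ with $Z = \sum_K |K|\beta_K$: it automatically satisfies $\int_\Omega\rho\,d\bx=1$, produces $\Krho = |K|\beta_K/Z$, $\rho_{\max} = \max_K\beta_K/Z = \gamma_h$, and $\Wrho \ge A_{jj}/(Cd_{\min}Z)$, whence $\lambda_{\min}(\SAS)\ge C\lambda_\rho/Z$.

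The final step substitutes this $\rho$ into the three cases of Lemma~\ref{lem:lambda:min:Delta}. In 1D the sum becomes $Z^{-1}\sum_K|K|\beta_K d_K$, cancelling one factor of $Z$. In 2D the logarithm reads $\ln^2(1+d_K\gamma_h)$, and the $Z^{1/2}$ extracted from the $Z^{-1}$ inside $\Krho$ reduces $C\lambda_\rho/Z$ to $CZ^{-1/2}(Z+\sum_K|K|\beta_K\ln^2(1+d_K\gamma_h))^{-1/2}$. For $d\ge 3$ the key identity $\Krho^{p/(p-1)}|K|^{-1/(p-1)} = Z^{-p/(p-1)}|K|\beta_K^{p/(p-1)}$ turns the sum into a multiple of $Z^{-p/(p-1)}$, and raising to $-(p-1)/p$ produces a factor $Z$ that cancels the $Z$ in the denominator of $C\lambda_\rho/Z$. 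The remaining $N$-dependence is then repackaged by pulling the averages $1/N^2$, $1/N$, or $1/N^{2p/(d(p-1))}$ out of the sums, with a leading $N^{-2/d}$ absorbing the slack, giving the three displayed expressions of~\eqref{eq:lambda:min:SAS}. The main obstacle is precisely this bookkeeping in the $d\ge 3$ case: verifying that the exponents $p/(p-1)-1/(p-1)=1$ on $|K|$ and $-(p-1)/p$ on the summation combine with the $N$-rescaling to yield exactly the $N^{-2/d}$ prefactor, while preserving the $((d-(d-2)p))^{d/(d+2p)}$ factor inherited from Lemma~\ref{lem:lambda:min:Delta}.
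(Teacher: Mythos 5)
Your proposal is correct and follows essentially the same route as the paper: the same diagonal bound $A_{jj}\le C d_{\min}\sum_{K\in\omega_j}\Abs{K}\beta_K$, the same density choice $\rho\vert_K=\beta_K/\sum_{\tilde K}\abs{\tilde K}\beta_{\tilde K}$ (which the paper motivates as making $\min_j \Wrho/A_{jj}$ attain a constant), and the same substitution into Lemma~\ref{lem:lambda:min:Delta}, with your exponent bookkeeping for $d\ge 3$ checking out. The only difference is presentational: you unpack the local mass-matrix eigenvalue bound explicitly where the paper cites the scaled analogue of~\eqref{eq:lminB}.
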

\begin{proof}
As for the non-scaled case we have (cf.~\eqref{eq:lminA:general} and~\eqref{eq:lminB})
\[
   \lambda_{\min}(S^{-1} B_\rho S^{-1}) \geq C \min_j \frac{\Wrho}{A_{jj}}
\]
and therefore
\begin{equation}
   \lambda_{\min} (\SAS) \geq
   C d_{\min} \lambda_\rho
   \min_{j} \frac{\Wrho}{A_{jj}}.
\label{eq:lminSAS:general:2}
\end{equation}
It is known~\cite{KamHuaXu13} that
\begin{equation}
   A_{jj} \le C \sum_{K \in \omega_j}\Abs{K} \cdot \Norm{\FDF}_2 
    = C d_{\min} \sum_{K \in \omega_j} \Abs{K} \beta_K,
    \quad j = 1,\dotsc, N_{vi} ,
   \label{A-diag-1}
\end{equation}
where $C$ is a constant depending only on $\hat{K}$ and the linear basis functions on $\hat{K}$.
Thus,
\[
  \min\limits_{j} \frac{\Wrho}{A_{jj}}   
   \geq \frac{C}{ d_{\min}} \min\limits_{j} \frac{\sum_{K\in\omega_j} \Abs{K} \rho_K}
   {\sum_{K \in \omega_j} \Abs{K} \beta_K} .
\]
Once again, we choose $\rho$ to get rid of the minimum sign\footnote{Although it is not quite clear if this choice is optimal, it is the best choice we could find so far.}, viz.,
\[
   \min\limits_{j} \frac{\sum_{K\in\omega_j} \Abs{K} \rho_K}
   {\sum_{K \in \omega_j} \Abs{K} \beta_K} = const.
\]
This and~\eqref{eq:rhoConditions} lead to
\[
   \rho(\bx) \vert_K 
      = \frac{\beta_K}{\sum\limits_{\tilde{K}\in\cT_h} \abs{\tilde{K}} \beta_{\tilde{K}} }
\]
and 
\[
   \min\limits_{j} \frac{\sum_{K\in\omega_j} \Abs{K} \rho_K}
   {\sum_{K \in \omega_j} \Abs{K} \beta_K} = \frac{1}{\sum\limits_{\tilde{K}\in\cT_h} \abs{\tilde{K}} \beta_{\tilde{K}}}.
\]
The estimate~\eqref{eq:lambda:min:SAS} follows from this, inequality~\eqref{eq:lminSAS:general:2} and Lemma~\ref{lem:lambda:min:Delta}.
\qquad
\end{proof}

\section{Condition numbers of the stiffness matrix and the diagonally scaled
   stiffness matrix}\label{sect:condition}

We first quote an estimate on the maximum eigenvalues of $A$ and  $\SAS$ from~\cite{KamHuaXu13}.

\begin{lemma}[\cite{KamHuaXu13}]
\label{lem:lMaxA}
The largest eigenvalues of the stiffness matrix $A$ and preconditioned stiffness matrix $\SAS$ with the Jacobi preconditioner $S = \diag \left(  \sqrt{A_{jj}} \right) $ for the linear finite element approximation of BVP~\eqref{eq:bvp1} are bounded by
\begin{align}
   \max\limits_{j} A_{jj} &\leq \lambda_{\max}(A)  \leq (d+1) \max\limits_{j} A_{jj} 
      \leq  d_{\min}  \sqrt{d(d+3)}\max_j \sum_{k\in\omega_j} \Abs{K} \beta_K,
      \label{eq:lambdaMaxA} \\
   1 &\leq \lambda_{\max}(\SAS ) \leq d + 1.
   \label{eq:lambdaMaxScaled}
\end{align}
\end{lemma}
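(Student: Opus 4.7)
The statement contains four inequalities, which I would dispatch in two batches. The two lower bounds $\max_j A_{jj} \leq \lambda_{\max}(A)$ and $1 \leq \lambda_{\max}(\SAS)$ are immediate from evaluating the Rayleigh quotient at the standard basis vectors $e_j$: one has $A_{jj} = e_j^T A e_j$, and $(\SAS)_{jj} = 1$ since the Jacobi scaling normalizes the diagonal; both quantities are bounded above by the corresponding largest eigenvalue.

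The heart of the argument is the upper bound $\lambda_{\max}(A) \leq (d+1)\max_j A_{jj}$, for which my plan is to decompose $\bu^T A \bu = \sum_K \V{u}_K^T A^K \V{u}_K$ element by element and exploit the $(d+1) \times (d+1)$ size of each local stiffness matrix $A^K$. Since $A^K$ is symmetric positive semidefinite, the entrywise Cauchy--Schwarz inequality gives $\Abs{A^K_{ij}} \leq \sqrt{A^K_{ii} A^K_{jj}}$, and therefore
\[
   \V{u}_K^T A^K \V{u}_K
      \leq \biggl(\sum_{v_i \in K} \sqrt{A^K_{ii}}\, \Abs{u_i}\biggr)^{2}
      \leq (d+1) \sum_{v_i \in K} A^K_{ii}\, u_i^2 ,
\]
where the second inequality is a discrete Cauchy--Schwarz that extracts the factor $d+1$ from the vertex count of a single simplex. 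Summing over $K$ and using $\sum_{K \in \omega_i} A^K_{ii} = A_{ii}$ yields $\bu^T A \bu \leq (d+1) \sum_i A_{ii} u_i^2$, from which both $\lambda_{\max}(A) \leq (d+1) \max_j A_{jj}$ and (via the substitution $\V{w} = S^{-1}\bu$, which transforms the Rayleigh quotient for $\SAS$ into $\V{w}^T A \V{w}/\sum_i A_{ii} w_i^2$) the bound $\lambda_{\max}(\SAS) \leq d+1$ follow at once.

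For the final inequality $(d+1)\max_j A_{jj} \leq d_{\min} \sqrt{d(d+3)} \max_j \sum_{K \in \omega_j} \Abs{K} \beta_K$, I would pull the diagonal entries back to the reference element via $\nabla \phi_j = {(F_K')}^{-T} \hat{\nabla} \hat{\phi}_j$, which gives
\[
  \nabla \phi_j \cdot \D_K \nabla \phi_j
  \leq \Norm{\FDF}_2 \Norm{\hat{\nabla} \hat{\phi}_j}_2^2
  = d_{\min}\, \beta_K \Norm{\hat{\nabla} \hat{\phi}_j}_2^2 .
\]
Inserting this into $A_{jj} = \sum_{K \in \omega_j} \Abs{K}\, \nabla \phi_j \cdot \D_K \nabla \phi_j$ and combining with the factor $d+1$ from the previous step, the numerical constant $\sqrt{d(d+3)}$ comes out of an explicit evaluation of the reference-basis gradient norms on the chosen unit-volume reference simplex.

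The only delicate point is the inner Cauchy--Schwarz producing the factor $d+1$: it must be applied locally, before assembling over elements. If one instead first invokes the trace bound $\lambda_{\max}(A^K) \leq \tr(A^K)$ and only then assembles, the resulting constant picks up a dependence on the mesh connectivity (e.g., the maximum number of elements in any patch $\omega_j$), and the clean, geometry-independent $d+1$ stated in the lemma is lost.
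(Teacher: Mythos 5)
The paper does not prove this lemma at all: it is quoted verbatim from \cite{KaHuXu12} (``We first quote an estimate \dots''), so there is no in-paper proof to compare against. Your argument is the standard one and, as far as I can tell, essentially reproduces the proof in that reference: lower bounds from Rayleigh quotients at coordinate vectors, the factor $d+1$ from element-wise assembly combined with the local entrywise and discrete Cauchy--Schwarz inequalities applied \emph{before} summing over elements, and the passage to $\beta_K$ by pulling gradients back to $\hat K$ via $\nabla\phi_j = {(F_K')}^{-T}\hat{\nabla}\hat{\phi}_j$. All of these steps are sound, and your closing observation --- that bounding $\lambda_{\max}(A^K)$ by its trace first and assembling afterwards would introduce a mesh-connectivity constant --- is correct and consistent with the discussion of patch-dependent constants in the introduction. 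The one loose end is the final numerical constant: your chain yields $(d+1)\max_j\Norm{\hat{\nabla}\hat{\phi}_j}_2^2$ where the lemma has $\sqrt{d(d+3)}$, and you assert rather than verify that the former is dominated by the latter. Since the paper normalizes only $\abs{\hat K}=1$ and leaves the shape of $\hat K$ unspecified (and $\beta_K$ itself depends on that choice), this step does require the explicit computation you defer: for the unit interval in $d=1$ the two quantities coincide ($2=\sqrt{4}$), and for an equilateral unit-volume reference simplex one checks $\max_j\Norm{\hat{\nabla}\hat{\phi}_j}_2^2\le\sqrt{d(d+3)}/(d+1)$ with room to spare, so the claim holds, but as written your proof leaves that inequality unproven.
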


Combining Lemma~\ref{lem:lMaxA} with Lemmas~\ref{lem:lMinA} and~\ref{lem:lMinSAS} we obtain our main theorem.

\begin{theorem}
\label{thm:condiiton:number}
Denote the average element volume by $\abs{\bar{K}} = \frac{\Omega}{N}$ and let $p \in (1, \frac{d}{d-2})$.
Then the condition numbers of $A$  and $\SAS$ for the linear finite element approximation of BVP~\eqref{eq:bvp1} are bounded by
\begin{multline}
   \kappa(A)  
      \leq C N^{\frac 2 d} \biggl(N^{\frac{d-2}{d}}
         \max\limits_j \sum\limits_{K\in \omega_j} \Abs{K} \beta_K \biggr)
      \\
      \times
      \begin{cases}
         \frac{1}{N} \sum\limits_K d_K ,
            & \text{for $d = 1$}, \\
         {\left( 1 + \frac{1}{N} \sum\limits_{K} \ln^2 
            \left( 1 + \frac{\abs{\bar{K}}}{\Abs{K_{\min}}}  d_K \right) \right)}^{ \frac{1}{2}},
            & \text{for $d = 2$}, \\
         \frac{1}{ {\left(\frac{d}{d-2} - p \right)}^{\frac{d}{d+ 2 p}}}
            {\left( \frac{1}{N} \sum\limits_K 
            {\left(\frac{\abs{\bar{K}}}{\Abs{K}} \right)}^{\frac{1}{p-1}}
               d_K^{\frac{p}{p-1}\cdot\frac{d-(d-2)p}{d+2p}}
            \right)}^{\frac{p-1}{p}},
            & \text{for $d\geq3$},\\
      \end{cases}
   \label{eq:kappa:A}
\end{multline}
and
\begin{equation}
    \kappa(\SAS )
      \leq C N^{\frac{2}{d}} 
      \times
      \begin{cases}
         \frac{1}{N^2 } \sum\limits_K \Abs{K} \beta_K d_K ,
            & \text{for $d = 1$}, \\
         {\left( \frac{1}{N} \sum\limits_K \Abs{K} \beta_K \right)}^{\frac 1 2}
            {\left( \frac{1}{N} \sum\limits_{K} \Abs{K} \beta_K 
            \left[ 1 + \ln^2 \left( 1 +  d_K\gamma_h \right)\right]
	            \right)}^{\frac 1 2},
            & \text{for $d = 2$}, \\
         \frac{1}{ {\left(\frac{d}{d-2} - p \right)}^{\frac{d}{d+ 2 p}} } 
            {\left( \frac{1}{N^{\frac{2 p}{d (p-1)}}} 
               \sum\limits_K \Abs{K} \beta_K^{\frac{p}{p-1}}
               d_K^{\frac{p}{p-1}\cdot\frac{d-(d-2)p}{d+2p}}
            \right)}^{ \frac{p-1}{p}} ,
            & \text{for $d\geq3$},
      \end{cases}
   \label{eq:kappa:SAS}
\end{equation}
where $\beta_K$ and $\gamma_h$ defined in~\eqref{beta-1} and~\eqref{gamma-1}.
\end{theorem}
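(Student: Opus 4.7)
The plan is to combine the upper bounds on the largest eigenvalues from Lemma~\ref{lem:lMaxA} with the lower bounds on the smallest eigenvalues from Lemmas~\ref{lem:lMinA} and~\ref{lem:lMinSAS}, and then to rearrange the prefactors so that the resulting expressions match the form stated in \eqref{eq:kappa:A} and \eqref{eq:kappa:SAS}. Since all the analytical work has been carried out in Section~\ref{sect:lambdaMinRho} and in Lemma~\ref{lem:lMaxA}, the theorem is essentially a bookkeeping consequence of dividing the two bounds.

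For \eqref{eq:kappa:A}, I would start from $\kappa(A) = \lambda_{\max}(A)/\lambda_{\min}(A)$ and use the estimate
\[
\lambda_{\max}(A) \leq d_{\min}\sqrt{d(d+3)}\max_j \sum_{K\in\omega_j}\Abs{K}\beta_K
\]
from Lemma~\ref{lem:lMaxA} together with the case-by-case lower bound on $\lambda_{\min}(A)$ from Lemma~\ref{lem:lMinA}. The factors of $d_{\min}$ cancel, leaving a factor $N\max_j\sum_{K\in\omega_j}\Abs{K}\beta_K$ in front of the reciprocal of the mesh-dependent bracketed expression of Lemma~\ref{lem:lMinA}. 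To match the form in the theorem I would then rewrite $N = N^{2/d}\cdot N^{(d-2)/d}$ so that the leading dimension-independent prefactor $N^{2/d}$ is isolated, exactly as in \eqref{eq:kappa:A}; the remaining dimension-dependent expressions come verbatim from Lemma~\ref{lem:lMinA}.

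For \eqref{eq:kappa:SAS}, I would apply the same recipe but use the much simpler bound $\lambda_{\max}(\SAS)\leq d+1$ from \eqref{eq:lambdaMaxScaled}. Dividing by the lower bound from Lemma~\ref{lem:lMinSAS} absorbs $d+1$ into the generic constant $C$ and produces a prefactor $N^{2/d}$ multiplying the reciprocals of the bracketed terms in \eqref{eq:lambda:min:SAS}, which is exactly the form in \eqref{eq:kappa:SAS}.

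Strictly speaking there is no hard step: the lemmas have been designed so that the bounds mesh, and the only care needed is algebraic, namely verifying the cancellations (of $d_{\min}$ in the unpreconditioned case and of $d+1$ in the preconditioned one) and confirming that the exponents of $N$ combine as claimed. The only place where a minor rewriting is required is the $d\geq 3$ case, where one has to check that the factor ${(d/(d-2)-p)}^{d/(d+2p)}$ from Lemma~\ref{lem:lambda:min:Delta} appears with the reciprocal exponent after inversion, and that the exponent $p/(p-1)$ on $d_K$ in Lemma~\ref{lem:lMinA} is retained when taking the $-(p-1)/p$-th power as in \eqref{eq:kappa:A}. With these elementary checks the theorem follows immediately.
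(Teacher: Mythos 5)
Your proposal is correct and matches the paper's own (one-line) proof exactly: the theorem is obtained by dividing the upper bounds on $\lambda_{\max}$ from Lemma~\ref{lem:lMaxA} by the lower bounds on $\lambda_{\min}$ from Lemmas~\ref{lem:lMinA} and~\ref{lem:lMinSAS}, with the cancellation of $d_{\min}$, the absorption of $d+1$ into $C$, and the splitting $N = N^{2/d}N^{(d-2)/d}$ being the only bookkeeping required.
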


\begin{rem}
\label{rem:boundary}
From the theorem we can see that both bounds for $\kappa(A)$ and $\kappa(\SAS)$ contain the maximum distance $d_K$ from element $K$ to the boundary of the domain.
Since $d_K$ becomes smaller when $K$ is closer to $\partial \Omega$, elements close to the boundary have less influence on the bounds than those away from the boundary.
This information is useful for problems having boundary layers for which adaptive meshes are typically dense near the boundary.
\end{rem}

\begin{rem}[Comparison to~\cite{KamHuaXu13}]
The bounds in~\eqref{eq:kappa:A} and~\eqref{eq:kappa:SAS} are sharper than (in the cases of $d\le 2$) or comparable to (in the case of $d \ge 3$) those obtained in~\cite{KamHuaXu13}[Theorem~5.2] using Sobolev's inequality,
\begin{equation}
   \kappa(A) \leq C N^{\frac{2}{d}}
      \times \biggl(N^{\frac{d-2}{d}} 
       \max_j \sum\limits_{K\in\omega_j}
       \Abs{K} \beta_K \biggl)
   \times 
   \begin{cases}
      1, & \text{for $d = 1$}, \\
      1 +  \ln \frac{\abs{\bar{K}}}{\Abs{K_{\min}}}, & \text{for $d = 2$}, \\
      {\left(\frac{1}{N}\sum_{K} 
         {\left(\frac{\abs{\bar{K}}}{\Abs{K}}\right)}^{\frac{d-2}{2}} \right)}^{\frac{2}{d}},
         & \text{for $d\geq3$}.
   \end{cases}
   \label{eq:KamHuaXu13}
\end{equation}
and
\begin{equation}
 \kappa(\SAS) 
   \leq C N^{\frac{2}{d}} 
   \times 
   \begin{cases}
      \left( \frac{1}{N^2} \sum_K \Abs{K} \beta_K \right),  & \text{for $d = 1$}, \\
       \left( \frac{1}{N} \sum_K \Abs{K} \beta_K \right) (1 + \Abs{\ln \gamma_h}) ,
         & \text{for $d = 2$},  \\
      {\left( \frac{1}{N} \sum_K \Abs{K} \beta_K^{\frac{d}{2}} \right)}^{\frac{2}{d}},
         & \text{for $d\geq3$}.
   \end{cases}
   \label{eq:KamHuaXu13:SAS}
\end{equation}
For $d \le 2$, bounds~\eqref{eq:KamHuaXu13} and~\eqref{eq:KamHuaXu13:SAS} follow directly from Theorem~\ref{thm:condiiton:number} if we replace $d_K$ by its largest possible value $h_\Omega$ (the diameter of $\Omega$).
For $d\geq3$, if we replace $d_K$ with $h_{\Omega}$ and take $p$ close to $d/(d-2)$, the new bounds are very close to the bound~\eqref{eq:KamHuaXu13} and~\eqref{eq:KamHuaXu13:SAS}.
In this sense, they are comparable.
\end{rem}

\begin{rem}[$\D^{-1}$-uniform meshes]
\label{rem:Duniformity}
Like~\eqref{eq:KamHuaXu13} and~\eqref{eq:KamHuaXu13:SAS}, bounds~\eqref{eq:kappa:A} and~\eqref{eq:kappa:SAS} involve three factors: the basic factor $N^{\frac{2}{d}}$ which corresponds to the condition number of the stiffness matrix for the Laplace operator on a uniform mesh, the factor involving $\beta_K$ which reflects the mesh non-uniformity in the metric specified by $\D^{-1}$, and the factor involving the element volume which represents the mesh non-uniformity in the Euclidean metric. 
Since \[
   \frac{1}{d}\tr\left( \FDF \right) \leq \Norm{\FDF}_2 \leq \tr\left( \FDF \right),
\]
we can estimate $\beta_K$ as
\begin{multline*}
   {\left(\frac{N}{\sigma_h}\right)}^{\frac{2}{d}}
      \left(\frac{\frac{1}{d}\tr\left(\FDF\right) }{ {\det\left(\FDF\right)}^{\frac 1 d}}
      \right)
      {\left(\frac{\frac{\sigma_h}{N}}{\Abs{K} {\det(\D_K)}^{-\frac 1 2}}\right)}^{\frac 2 d}
   \\
   \le \beta_K \le d {\left(\frac{N}{\sigma_h}\right)}^{\frac{2}{d}}
      \left(\frac{\frac{1}{d}\tr\left(\FDF\right) }
         { {\det\left(\FDF\right)}^{\frac 1 d}} \right) 
      {\left(\frac{\frac{\sigma_h}{N}}{\Abs{K} {\det(\D_K)}^{-\frac 1 2}} 
         \right)}^{\frac 2 d} ,
\end{multline*}
where $\sigma_h = \sum\limits_K \Abs{K} {\det(\D_K)}^{-\frac 1 2}$.
For any uniform mesh in the metric specified by $\D^{-1}$ we have (e.g.,\ see Huang and Russell~\cite{HuaRus11})
\[
\frac{1}{d}\tr\left(\FDF\right) = {\det\left(\FDF\right)}^{\frac 1 d}
\quad\text{and}\quad \frac{\sigma_h}{N} = \Abs{K} {\det(\D_K)}^{-\frac 1 2} 
\]
and therefore
\[
   {\left(\frac{N}{\sigma_h}\right)}^{\frac{2}{d}} 
      \le \beta_K \le d {\left(\frac{N}{\sigma_h}\right)}^{\frac{2}{d}} .
\]
From~\eqref{eq:kappa:A} and~\eqref{eq:kappa:SAS}, we get
\begin{equation}
    \kappa(A) \leq C {\left(\frac{N}{\sigma_h}\right)}^{\frac 2 d} \left(N \Abs{\omega_{\max}}\right)
    \times
      \begin{cases}
         \frac{1}{N} \sum\limits_K d_K ,  & \text{for $d = 1$}, \\
         {\left( \frac{1}{N} \sum\limits_{K}
            \left[ 1 + \ln^2 \left( 1 + d_K \frac{\Abs{\bar{K}}}{\Abs{K_{\min}}} \right) \right]
	         \right)}^{ \frac 1 2}, & \text{for $d = 2$}, \\
        \frac{1}{ {\left(\frac{d}{d-2} - p  \right)}^{\frac{d}{d+ 2 p}}}
         {\left( \frac{1}{N} \sum\limits_K 
            {\left(\frac{\abs{\bar{K}}}{\Abs{K}} \right)}^{\frac{1}{p-1}}
               d_K^{\frac{p}{p-1}\cdot\frac{d-(d-2)p}{d+2p}}
         \right)}^{\frac{p-1}{p}} , & \text{for $d\geq3$},
      \end{cases}
   \label{eq:kappa:A-2}
\end{equation}
and
\begin{equation}
   \kappa(\SAS )
    \leq C {\left(\frac{N}{\sigma_h}\right)}^{\frac 2 d}
    \times
      \begin{cases}
         \sum\limits_K \Abs{K} d_K,  & \text{for $d = 1$}, \\
         {\left( \sum\limits_{K} \Abs{K} \left[ 1 + \ln^2 \left( 1 +  d_K \right)\right]
	         \right)}^{\frac 1 2}, & \text{for $d = 2$}, \\
        \frac{1}{ {\left(\frac{d}{d-2} - p  \right)}^{\frac{d}{d+ 2 p}} } 
         {\left( \sum\limits_K \Abs{K} d_K^{\frac{p}{p-1}\cdot\frac{d-(d-2)p}{d+2p}}
            \right)}^{ \frac{p-1}{p}} , & \text{for $d\geq3$}.
      \end{cases}
   \label{eq:kappa:SAS-2}
\end{equation}
If we replace $d_K$ by $h_{\Omega}$ in~\eqref{eq:kappa:SAS-2}, we get
\[
\kappa(\SAS ) \leq C {\left(\frac{N}{\sigma_h}\right)}^{\frac 2 d} ,
\]
which gives the same bound obtained in~\cite{KamHuaXu13} for $\D^{-1}$-uniform meshes.
\end{rem}

\begin{rem}
As mentioned before, the approach we used here to estimate $\lambda_{\min}(A)$ and $\lambda_{\min}(\SAS )$ was first proposed by Fried~\cite{Fri73}.
However, there is significant difference between our development and Fried's.
First, Fried obtains a bound on $\lambda_{\min}^\rho(-\Delta)$ for balls where the analytical formula of the Green's function is available and claims using a physical intuition that the bound is also valid for other domains.
Our derivation for the bound on $\lambda_{\min}^\rho(-\Delta)$ is mathematically rigorous, as seen in Lemmas~\ref{lem:1}--\ref{lem:lambda:min:Delta}.
Second, the lower bound obtained in Fried~\cite{Fri73} for $\lambda_{\min}(A)$ can be expressed in the current notation as
\begin{align}
   \lambda_{\min}(A) & \geq \frac{C d_{\min}}{N}
      \begin{cases}
         1 ,  & \text{for $d = 1$}, \\
         {\left( 1 + \ln \frac{\Abs{\bar{K}}}{\Abs{K_{\min}}}  \right)}^{-1},
	& \text{for $d = 2$}, \\ 
            {\left(\frac{\abs{\bar{K}}}{\Abs{K_{\min}}} \right)}^{\frac{2}{d}-1} , & \text{for $d\geq3$} .\\
      \end{cases}
   \label{eq:lMin:A-Fried}
\end{align}
It is easy to see that our estimate~\eqref{eq:lMin:A} is sharper than~\eqref{eq:lMin:A-Fried}.
Third, Fried estimates the maximum eigenvalue of the stiffness matrix by the maximum of the maximum eigenvalues of the local stiffness matrices, with the latter being computed directly for the Laplace operator.
On the other hand, our estimate on the maximum eigenvalue of the stiffness matrix in Lemma~\ref{lem:lMaxA} is not only more general but also more accurate.
Finally, we would like to mention that Fried~\cite{Fri73} does not study preconditioning for the stiffness matrix.
\end{rem}

\section{Numerical experiments}\label{sect:num}
The dependence of the conditioning on the diffusion matrix was extensively discussed in~\cite{KamHuaXu13} and the purpose of this paper is to investigate the dependence of the conditioning on the mesh density throughout the domain.
Thus, for the simplicity, we set $\D = I$ (Laplace operator) in all of our numerical examples.

Bounds on the smallest eigenvalue contain a constant $C$ independent of the mesh but dependent on the dimension, domain, reference element, and basis functions on the reference element.
In our computation we obtain its value empirically by calibrating the bounds for $\lambda_{\min}(\SAS)$ through comparing the exact and estimated values for a series of uniform meshes.

\subsection{1D}\label{sect:num:i}

In 1D, the new bounds~\eqref{eq:kappa:A} and~\eqref{eq:kappa:SAS} (with $\D = I$) become
\begin{align}
   & \kappa(A) \le C  \sum\limits_{K} d_K 
         \cdot \max\limits_{j} \sum\limits_{K\in \omega_j} \frac{1}{\Abs{K}} ,
      \label{eq:iD:new:i} \\
  &  \kappa(\SAS) \le C \sum\limits_{K} \frac{d_K}{\Abs{K}} .
   \label{eq:iD:new:ii}
\end{align}
On the other hand, the bounds~\eqref{eq:KamHuaXu13} and~\eqref{eq:KamHuaXu13:SAS} from~\cite{KamHuaXu13} reduce to
\begin{align}
   & \kappa(A) \le C N \cdot \max\limits_{j} 
      \sum\limits_{K\in \omega_j} \frac{1}{\Abs{K}} ,
      \label{eq:iD:old:i} \\
   & \kappa(\SAS) \le C \sum\limits_{K} \frac{1}{ \Abs{K}} .
   \label{eq:iD:old:ii}
\end{align}
We consider two set of meshes formed by Chebyshev nodes and nodes simulating extreme boundary layer refinement.

\subsubsection{Chebyshev nodes}
The first 1D example is a non-uniform mesh given by the end points $x_0 = 0$ and $x_N = 1$ and the Chebyshev nodes,
\begin{align}
      x_j =  \frac{1}{2} \bigl( 1 - \cos(\xi_j) \bigr),
          \quad j = 1,\dotsc, N-1,
   \label{eq:che:nodes}
\end{align}
where
\begin{align*}
\xi_0 &= 0, \ \xi_N = 1 \quad \text{and} \quad 
   \xi_j = \frac{\pi (2 j -1)}{2 \left(N-1\right)}, 
   \quad j = 1, \dotsc, N-1.
\end{align*}
For this mesh, we have
\[
   \Abs{K_j} = x_j - x_{j-1} \sim \frac{1}{N} \sin(\xi_j),
   \quad d_{K_j} = \min \left( \frac{1}{2}\bigl(1-\cos(\xi_j) \bigr), 
      \frac{1}{2} \bigl(1+ \cos(\xi_{j-1}) \bigr) \right).
\]
Using these, we get
\[
   \max\limits_{j} \sum\limits_{K\in \omega_j} \frac{1}{\Abs{K}}  
      \sim \max\limits_{j} \frac{1}{\Abs{K_j}} \sim N^2 
\]
and
\[
   \frac{1}{N} \sum\limits_{K} d_K \sim \frac{2}{N} 
      \sum\limits_{j=1}^{N/2} \frac{1}{2} \bigl(1-\cos(\xi_j) \bigr)
\sim \int_0^{\frac{\pi}{2}} \bigl(1-\cos(\xi) \bigr) ~d\xi \sim 1 .
\]
Thus, from~\eqref{eq:iD:new:i} and~\eqref{eq:iD:old:i} we see that the new bound and the bound from~\cite{KamHuaXu13} both lead to
\begin{equation*}
   \kappa(A) \le C N^3 .
\end{equation*}

For the scaled case, we have
\[
   \sum\limits_{K} \frac{d_K}{\Abs{K}} 
   \sim 2 \sum\limits_{j=1}^{N/2} 
         \frac{\frac{1}{2}\bigl(1-\cos(\xi_j)\bigr)}
         {\frac{\pi}{N}\sin(\xi_j) }
   \sim N^2 \int_0^{\frac{\pi}{2}} \frac{1-\cos(\xi)}{\sin(\xi)} \ d\xi
   \sim N^2
\]
and
\[
   \sum\limits_{K} \frac{1}{\Abs{K}} 
   \sim 2 \sum\limits_{j=1}^{N/2} \frac{1}{\frac{\pi}{N}\sin(\xi_j) }
   \sim N^2 \int_{\xi_1}^{\frac{\pi}{2}}\frac{1}{\sin(\xi)} \ d\xi
   \sim N^2 \ln \Abs{\tan(\xi/2)} \vert_{\xi_1}^{\frac{\pi}{2}} 
   \sim N^2\ln N .
\]
Thus, we have
   \begin{equation}
   \kappa(\SAS) \le C N^2
   \label{eq:iD:ii}
\end{equation}
for the new bound~\eqref{eq:iD:new:ii} and
\begin{equation*}
   \kappa(\SAS) \le C N^2 \ln N
\end{equation*}
from the bound~\eqref{eq:iD:old:ii} from~\cite{KamHuaXu13}.
Notice that~\eqref{eq:iD:ii} has the same order as $\kappa(\SAS)$ for a uniform mesh as $N$ increases.

The numerical comparison of the estimated and exact values for $\kappa(A)$ and $\kappa(\SAS)$ are presented in Fig.~\ref{fig:I:che}.
As expected, for the non-scaled case both estimates are comparable and very tight (Fig.~\ref{fig:I:che:A}).
On the other hand, the new estimate~\eqref{eq:kappa:A} is more accurate than the one from~\cite{KamHuaXu13} if diagonal scaling is applied (Fig.~\ref{fig:I:che:SAS}).
For this example, the new bound on $\kappa(\SAS)$ seems to be asymptotically exact.

\subsubsection{Boundary layer refinement}
The second 1D example simulates boundary layer mesh refinement towards the boundary point $x=0$ with the internal nodes 
\begin{equation}
   x_j = {2^j} / {2^N}, \qquad j = 1,\dotsc, N-1.
   \label{eq:power2:nodes}
\end{equation}
For this mesh, we have
\begin{align*}
   \Abs{K_j} 
   = x_j - x_{j-1}
   = \frac{2^{j-1}}{2^N}, 
      \quad  d_{K_j} = \min \left(\frac{2^{j}}{2^N}, 
         1 - \frac{2^{j-1}}{2^N} \right).
\end{align*}
Then, bounds~\eqref{eq:iD:old:i} and~\eqref{eq:iD:old:ii} for~\cite{KamHuaXu13} become
\begin{align*}
   &\kappa(A) \le C N 2^N
   \quad \text{and} \quad
   \kappa(\SAS) \le C 2^N,
\end{align*}
whereas the new bounds~\eqref{eq:iD:new:i} and~\eqref{eq:iD:new:ii} give
\begin{align*}
   \kappa(A) \le C 2^N 
   \quad \text{and} \quad
   \kappa(\SAS) \le C N.
\end{align*}
This shows that, for boundary layer refinement, the new bound for the scaled case is a significant improvement.
Note that $\kappa(\SAS) = \cO(N)$, which has a smaller order as $N \to \infty$ than the condition number for a uniform mesh (which is $\cO(N^2)$).
Thus, for 1D problems with steep boundary layers, strong mesh concentration towards the boundary not only increases the accuracy of the solution but, at the same time, improves the conditioning.

Numerical results are in perfect agreement with the analysis.
Indeed, Fig.~\ref{fig:I:power2:A} shows that both new and old estimates are comparable for the non-scaled case, with the new one being slightly more accurate.
After scaling (Fig.~\ref{fig:I:power2:SAS}), the situation is quite different: the new estimate is very close to the exact value whereas the estimate from~\cite{KamHuaXu13} exhibits a dramatic overestimation.

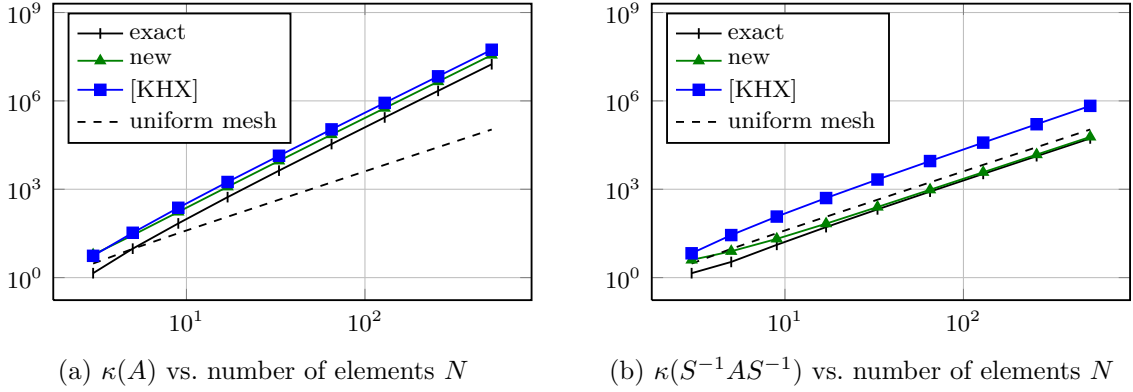
\begin{figure}[t]
   \begin{subfigure}[t]{0.5\textwidth} \centering
   \begin{tikzpicture}
      \begin{loglogaxis}[legend pos=north west, ymax=2.0e+09,]
      \addplot[color=\MyColorExact, \MyLineStyle, mark=\MyMarkExact,]  
         table [x index=0, y index=1, col sep = space] {\dataPath/chebyshev-kappa.dat};
      \addlegendentry{exact}
      \addplot[color=\MyColorRho, \MyLineStyle, mark=\MyMarkRho,]
         table [x index=0, y index=10, col sep = space] {\dataPath/chebyshev-kappa.dat};
      \addlegendentry{new}
      \addplot[color=\MyColorKHX, \MyLineStyle, mark=\MyMarkKHX]
         table [x index=0, y index=2, col sep = space] {\dataPath/chebyshev-kappa.dat};
      \addlegendentry{\cite{KamHuaXu13}}
      \addplot[color=\MyColorUniform, \MyLineStyleUniform]
         table [x index=0, y index=1, col sep = space] {\dataPath/uniform1-kappa.dat};
      \addlegendentry{uniform mesh}
      \end{loglogaxis}
   \end{tikzpicture}
   \caption{$\kappa(A)$ vs.\ number of elements $N$}\label{fig:I:che:A}
   \end{subfigure}%
   \begin{subfigure}[t]{0.5\textwidth} \centering
   \begin{tikzpicture}
      \begin{loglogaxis}[legend pos=north west, ymax=2.0e+09,]
      \addplot[color=\MyColorExact, \MyLineStyleScaled, mark=\MyMarkExact,]  
         table [x index=0, y index=4, col sep = space] {\dataPath/chebyshev-kappa.dat};
      \addlegendentry{exact}
      \addplot[color=\MyColorRho, \MyLineStyleScaled, mark=\MyMarkRho,]
         table [x index=0, y index=11, col sep = space] {\dataPath/chebyshev-kappa.dat};
      \addlegendentry{new}
      \addplot[color=\MyColorKHX, \MyLineStyleScaled, mark=\MyMarkKHX]
         table [x index=0, y index=5, col sep = space] {\dataPath/chebyshev-kappa.dat};
      \addlegendentry{\cite{KamHuaXu13}}
      \addplot[color=\MyColorUniform, \MyLineStyleUniform]
         table [x index=0, y index=1, col sep = space] {\dataPath/uniform1-kappa.dat};
      \addlegendentry{uniform mesh}
      \end{loglogaxis}
   \end{tikzpicture}
   \caption{$\kappa(\SAS)$ vs.\ number of elements $N$}\label{fig:I:che:SAS}
   \end{subfigure}
   \caption{Conditioning for the Chebyshev nodes~\eqref{eq:che:nodes}}\label{fig:I:che}
\end{figure}
\begin{figure}[t]
   \begin{subfigure}[t]{0.5\textwidth} \centering
   \begin{tikzpicture}
      \begin{loglogaxis}[legend pos=north west, ymax=4.0e+12,
         xticklabel=\pgfmathparse{exp (\tick)}\pgfmathprintnumber{\pgfmathresult},
         xtick = {3, 5, 9, 17, 33}, ]
      \addplot[color=\MyColorExact, \MyLineStyle, mark=\MyMarkExact,]  
         table [x index=0, y index=1, col sep = space] {\dataPath/power2-kappa.dat};
      \addlegendentry{exact}
      \addplot[color=\MyColorRho, \MyLineStyle, mark=\MyMarkRho,]
         table [x index=0, y index=10, col sep = space] {\dataPath/power2-kappa.dat};
      \addlegendentry{new}
      \addplot[color=\MyColorKHX, \MyLineStyle, mark=\MyMarkKHX]
         table [x index=0, y index=2, col sep = space] {\dataPath/power2-kappa.dat};
      \addlegendentry{\cite{KamHuaXu13}}
      \addplot[color=\MyColorUniform, \MyLineStyleUniform]
         table [x index=0, y index=1, col sep = space] {\dataPath/uniform1-p-kappa.dat};
      \addlegendentry{uniform mesh}
      \end{loglogaxis}
   \end{tikzpicture}
   \caption{$\kappa(A)$ vs.\ number of elements $N$}\label{fig:I:power2:A}
   \end{subfigure}%
   \begin{subfigure}[t]{0.5\textwidth} \centering
   \begin{tikzpicture}
      \begin{loglogaxis}[legend pos=north west, ymax=4.0e+12,
         xticklabel=\pgfmathparse{exp (\tick)}\pgfmathprintnumber{\pgfmathresult},
         xtick = {3, 5, 9, 17, 33}, ]
      \addplot[color=\MyColorExact, \MyLineStyleScaled, mark=\MyMarkExact,]  
         table [x index=0, y index=4, col sep = space] {\dataPath/power2-kappa.dat};
      \addlegendentry{exact}
      \addplot[color=\MyColorRho, \MyLineStyleScaled, mark=\MyMarkRho,]
         table [x index=0, y index=11, col sep = space] {\dataPath/power2-kappa.dat};
      \addlegendentry{new}
      \addplot[color=\MyColorKHX, \MyLineStyleScaled, mark=\MyMarkKHX]
         table [x index=0, y index=5, col sep = space] {\dataPath/power2-kappa.dat};
      \addlegendentry{\cite{KamHuaXu13}}
      \addplot[color=\MyColorUniform, \MyLineStyleUniform]
         table [x index=0, y index=1, col sep = space] {\dataPath/uniform1-p-kappa.dat};
      \addlegendentry{uniform mesh}
      \end{loglogaxis}
   \end{tikzpicture}
   \caption{$\kappa(\SAS)$ vs.\ number of elements $N$}\label{fig:I:power2:SAS}
   \end{subfigure}
   \caption{Conditioning for the boundary layer 
      refinement~\eqref{eq:power2:nodes}}\label{fig:I:power2}
\end{figure}
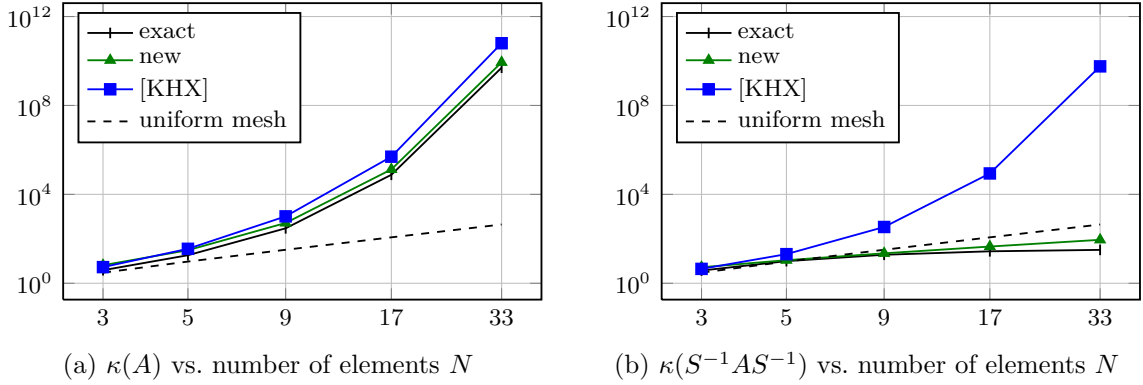

\subsection{2D}\label{sect:num:2d}

\begin{figure}[p]
   \subcaptionbox{2D meshes used in Sect.~\ref{sect:num:2d}\label{fig:II:mesh}}
   [0.5\linewidth] {
      \centering
      \includegraphics[width=0.25\linewidth, height=0.25\linewidth, clip]
         {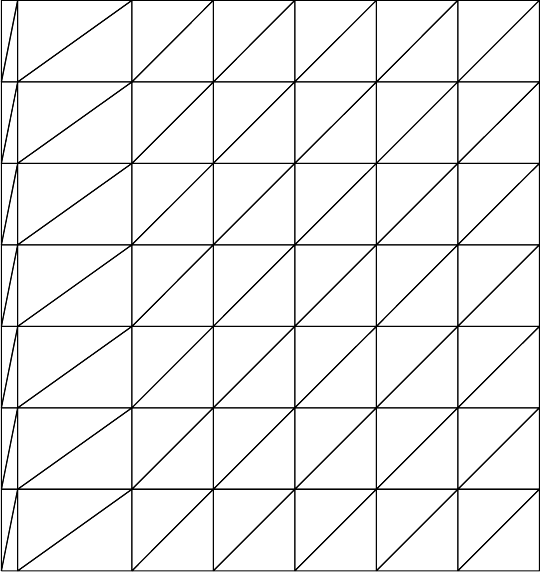}
   }%
   \subcaptionbox{3D meshes used in Sect.~\ref{sect:num:3d}\label{fig:III:mesh}}
   [0.5\linewidth] {
      \centering
      \includegraphics[width=0.25\linewidth, height=0.25\linewidth, clip]
         {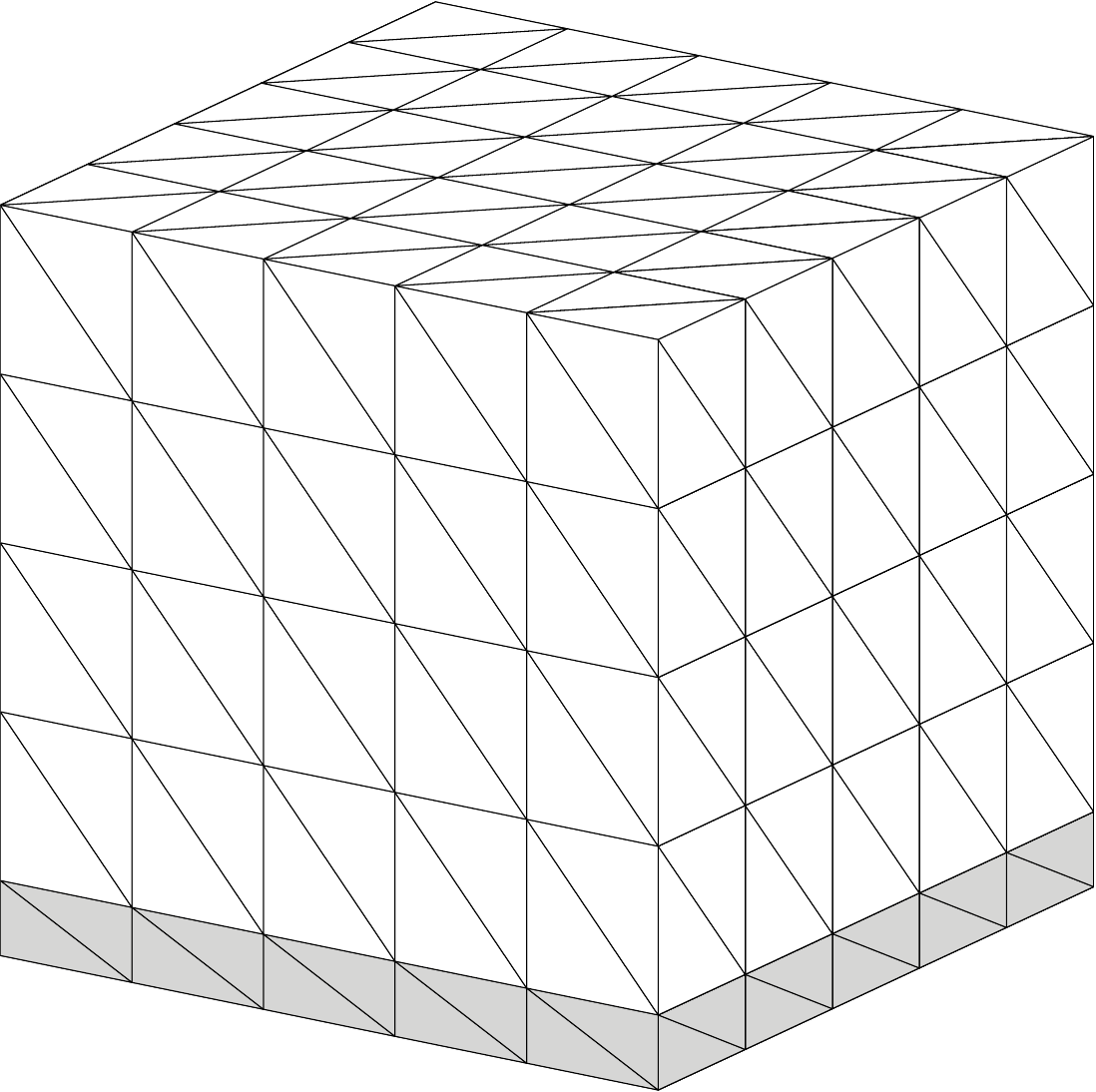}
   }
   \caption{Test meshes in two and three dimensions}\label{fig:meshes}
\end{figure}

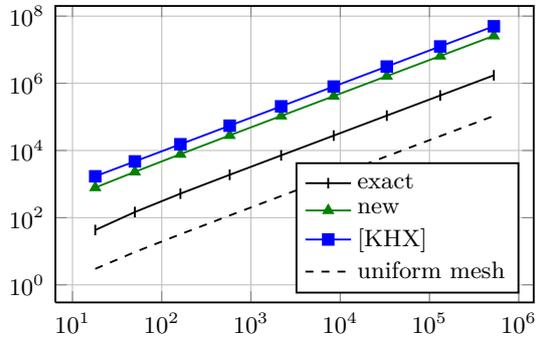
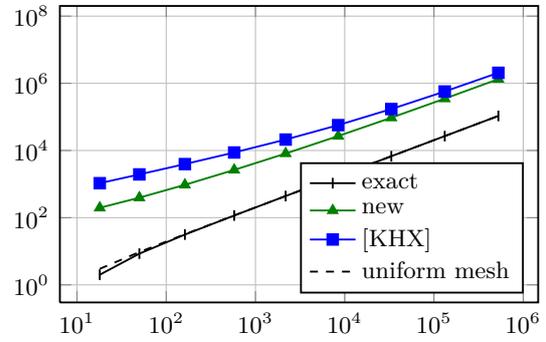
\begin{figure}[p]
   \begin{subfigure}[p]{0.5\textwidth} \centering
      \begin{tikzpicture}
      \begin{loglogaxis}[legend pos=south east, ymin=0.3, ymax=2.0e+08, ]
         \addplot[color=\MyColorExact, \MyLineStyle, mark=\MyMarkExact,]  
            table [x index=0, y index=1, col sep = space] {\dataPath/n2-bndy-kappa.dat};
         \addlegendentry{exact}
         \addplot[color=\MyColorRho, \MyLineStyle, mark=\MyMarkRho]
            table [x index=0, y index=10, col sep = space] {\dataPath/n2-bndy-kappa.dat};
         \addlegendentry{new}
         \addplot[color=\MyColorKHX, \MyLineStyle, mark=\MyMarkKHX]
            table [x index=0, y index=2, col sep = space] {\dataPath/n2-bndy-kappa.dat};
         \addlegendentry{\cite{KamHuaXu13}}
         \addplot[color=\MyColorUniform, \MyLineStyleUniform]
            table [x index=0, y index=1, col sep = space] {\dataPath/uniform2-kappa.dat};
         \addlegendentry{uniform mesh}
           \end{loglogaxis}
      \end{tikzpicture}
      \caption{$\kappa(A)$ vs.\ number of elements $N$}\label{fig:II:n:A}
   \end{subfigure}%
   \begin{subfigure}[p]{0.5\textwidth} \centering
      \begin{tikzpicture}
      \begin{loglogaxis}[legend pos=south east, ymin=0.3, ymax=2.0e+08,]
         \addplot[color=\MyColorExact, \MyLineStyleScaled, mark=\MyMarkExact,]  
            table [x index=0, y index=4, col sep = space] {\dataPath/n2-bndy-kappa.dat};
         \addlegendentry{exact}
         \addplot[color=\MyColorRho, \MyLineStyleScaled, mark=\MyMarkRho]
            table [x index=0, y index=11, col sep = space] {\dataPath/n2-bndy-kappa.dat};
         \addlegendentry{new}
         \addplot[color=\MyColorKHX, \MyLineStyleScaled, mark=\MyMarkKHX]
            table [x index=0, y index=5, col sep = space] {\dataPath/n2-bndy-kappa.dat};
         \addlegendentry{\cite{KamHuaXu13}}
         \addplot[color=\MyColorUniform, \MyLineStyleUniform]
            table [x index=0, y index=1, col sep = space] {\dataPath/uniform2-kappa.dat};
         \addlegendentry{uniform mesh}
           \end{loglogaxis}
      \end{tikzpicture}
      \caption{$\kappa(\SAS)$ vs.\ number of elements $N$}\label{fig:II:n:SAS}
   \end{subfigure}
   \caption{2D meshes with a fixed maximum aspect ratio 
      of $125:1$ and changing $N$}\label{fig:II:n}
\end{figure}

\begin{figure}[p]
   \begin{subfigure}[p]{0.5\textwidth} \centering
      \begin{tikzpicture}
      \begin{loglogaxis}[legend pos=north west, ymin=1.0e+03, ymax=2.0e+08, ]
         \addplot[color=\MyColorExact, \MyLineStyle, mark=\MyMarkExact,]  
            table [x index=8, y index=1, col sep = space] {\dataPath/ar2-bndy-kappa.dat};
         \addlegendentry{exact}
         \addplot[color=\MyColorRho, \MyLineStyle, mark=\MyMarkRho]
            table [x index=8, y index=10, col sep = space] {\dataPath/ar2-bndy-kappa.dat};
         \addlegendentry{new}
         \addplot[color=\MyColorKHX, \MyLineStyle, mark=\MyMarkKHX]
            table [x index=8, y index=2, col sep = space] {\dataPath/ar2-bndy-kappa.dat};
         \addlegendentry{\cite{KamHuaXu13}}
          \end{loglogaxis}
      \end{tikzpicture}
      \caption{$\kappa(A)$ vs.\ maximum aspect ratio}\label{fig:II:ar:A}
   \end{subfigure}%
   \begin{subfigure}[p]{0.5\textwidth} \centering
      \begin{tikzpicture}
      \begin{loglogaxis}[legend pos=north west, ymin=1.0e+03, ymax=2.0e+08,]
         \addplot[color=\MyColorExact, \MyLineStyleScaled, mark=\MyMarkExact,]  
            table [x index=8, y index=4, col sep = space] {\dataPath/ar2-bndy-kappa.dat};
         \addlegendentry{exact}
         \addplot[color=\MyColorRho, \MyLineStyleScaled, mark=\MyMarkRho]
            table [x index=8, y index=11, col sep = space] {\dataPath/ar2-bndy-kappa.dat};
         \addlegendentry{new}
         \addplot[color=\MyColorKHX, \MyLineStyleScaled, mark=\MyMarkKHX]
            table [x index=8, y index=5, col sep = space] {\dataPath/ar2-bndy-kappa.dat};
         \addlegendentry{\cite{KamHuaXu13}}
          \end{loglogaxis}
      \end{tikzpicture}
      \caption{$\kappa(\SAS)$ vs.\ maximum aspect ratio}\label{fig:II:ar:SAS}
   \end{subfigure}
   \caption{2D meshes with a fixed $N = 20\,000$ and changing aspect ratio}\label{fig:II:ar}
\end{figure}
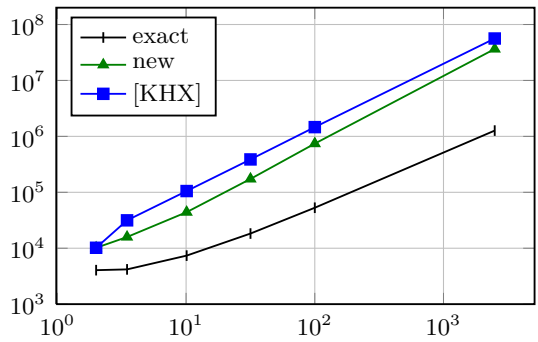
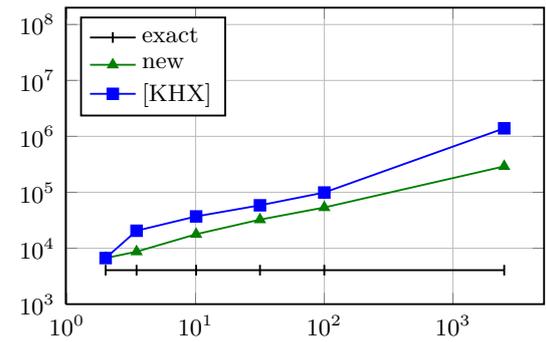

In 2D we consider a mesh for the unit square $[0,1]\times[0,1]$ with $\cO(\sqrt{N})$ skew elements near the boundary, as shown in Fig.~\ref{fig:II:mesh}.

First, we set the maximum aspect ratio at $125:1$ and verify the dependence of the condition number on the number of mesh elements $N$ (Fig.~\ref{fig:II:n}).
Then, we set $N = 20\,000$ and change the maximum aspect ratio of the mesh elements to investigate the dependence of the conditioning on the mesh shape (Fig.~\ref{fig:II:ar}).

The observation is that the new and the~\cite{KamHuaXu13} estimates are comparable, with the new one being slightly more accurate, especially when changing the maximal mesh aspect ratio.
However, neither of the estimates is as accurate as in 1D, meaning that our estimation of the smallest eigenvalue can be further improved.
This observation is essentially valid in higher dimensions as well. 
Since the Green's function has a singularity for $d\geq2$ (and depends on the shape of the domain), it is difficult to obtain estimates as sharp as in 1D in general.
Interestingly, in our example, the exact condition number of the scaled stiffness matrix appears to be independent of the aspect ratio of the boundary layer elements.
This  suggests that the optimal estimate in 2D, similar to the 1D case, should be
\begin{equation}
   \kappa(\SAS) \le C  \sum_K \Abs{K} \beta_K
         \log\left(1 + \frac{d_K}{\Abs{K}}\right).
   \label{eq:SAS:supposed}
\end{equation}

\subsection{3D}\label{sect:num:3d}
Similarly to the 2D case, we use a mesh for the unit cube ${[0,1]}^3$ with $\cO(N^{2/3})$ skew elements near the boundary (Fig.~\ref{fig:III:mesh}) and consider two different settings: fixed anisotropy ($25 : 25 : 1$) with increasing number of elements and fixed $N = 29\,478$ paired with the changing anisotropy of the mesh.
Numerical results for $p=2.9$ are presented in Figs.~\ref{fig:III:n} and~\ref{fig:III:ar}.

First, we observe that the new estimate is comparable to the estimate from~\cite{KamHuaXu13} for both non-scaled and scaled cases.
Second, as in the 2D case, we observe that increasing the maximum aspect ratio of elements at the boundary has no impact on the exact condition number of the Jacobi preconditioned stiffness matrix (at least for the considered mesh type).
This indicates that the obtained bounds for the scaled stiffness matrix are not optimal.
The numerical results also suggest that the optimal bound should have a much stronger dependence on the distance $d_K$ of an element $K$ from the boundary of the domain.

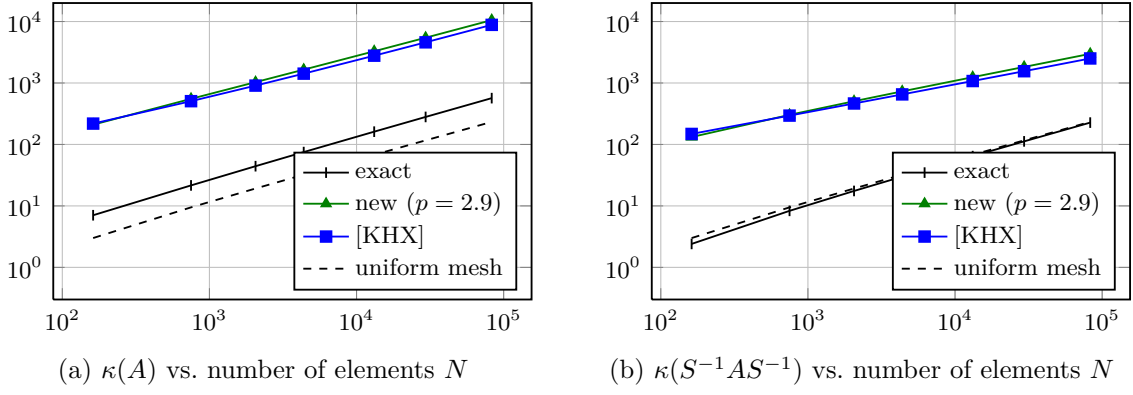
\begin{figure}[t]
  \begin{subfigure}[t]{0.5\textwidth} \centering
      \begin{tikzpicture}
      \begin{loglogaxis}[legend pos=south east, ymin=0.3e+00, ymax=2.0e+04, ]
         \addplot[color=\MyColorExact, \MyLineStyle, mark=\MyMarkExact,]  
            table [x index=0, y index=1, col sep = space] {\dataPath/n3-bndy-kappa.dat};
         \addlegendentry{exact}
         \addplot[color=\MyColorRho, \MyLineStyle, mark=\MyMarkRho]
            table [x index=0, y index=10, col sep = space] {\dataPath/n3-bndy-kappa.dat};
         \addlegendentry{new ($p=2.9$)}
         \addplot[color=\MyColorKHX, \MyLineStyle, mark=\MyMarkKHX]
            table [x index=0, y index=2, col sep = space] {\dataPath/n3-bndy-kappa.dat};
         \addlegendentry{\cite{KamHuaXu13}}
         \addplot[color=\MyColorUniform, \MyLineStyleUniform]
            table [x index=0, y index=1, col sep = space] {\dataPath/uniform3-kappa.dat};
         \addlegendentry{uniform mesh}
         \end{loglogaxis}
      \end{tikzpicture}
      \caption{$\kappa(A)$ vs.\ number of elements $N$}\label{fig:III:n:A}
   \end{subfigure}%
   \begin{subfigure}[t]{0.5\textwidth} \centering
      \begin{tikzpicture}
      \begin{loglogaxis}[legend pos=south east, ymin=0.3e+00, ymax=2.0e+04, ]
         \addplot[color=\MyColorExact, \MyLineStyleScaled, mark=\MyMarkExact,]  
            table [x index=0, y index=4, col sep = space] {\dataPath/n3-bndy-kappa.dat};
         \addlegendentry{exact}
         \addplot[color=\MyColorRho, \MyLineStyleScaled, mark=\MyMarkRho]
            table [x index=0, y index=11, col sep = space] {\dataPath/n3-bndy-kappa.dat};
         \addlegendentry{new ($p=2.9$)}
         \addplot[color=\MyColorKHX, \MyLineStyleScaled, mark=\MyMarkKHX]
            table [x index=0, y index=5, col sep = space] {\dataPath/n3-bndy-kappa.dat};
         \addlegendentry{\cite{KamHuaXu13}}
         \addplot[color=\MyColorUniform, \MyLineStyleUniform]
            table [x index=0, y index=1, col sep = space] {\dataPath/uniform3-kappa.dat};
         \addlegendentry{uniform mesh}
           \end{loglogaxis}
      \end{tikzpicture}
      \caption{$\kappa(\SAS)$ vs.\ number of elements $N$}\label{fig:III:n:SAS}
   \end{subfigure}
   \caption{3D meshes with a fixed aspect ratio 
      of $25:25:1$ and changing $N$}\label{fig:III:n}
\end{figure}

\begin{figure}[t]
   \begin{subfigure}[t]{0.5\textwidth} \centering
      \begin{tikzpicture}
      \begin{loglogaxis}[legend pos=north west, ymin=4.0e+01, ymax=2.0e+06]
         \addplot[color=\MyColorExact, \MyLineStyle, mark=\MyMarkExact,]  
            table [x index=8, y index=1, col sep = space] {\dataPath/ar3-bndy-kappa.dat};
         \addlegendentry{exact}
         \addplot[color=\MyColorRho, \MyLineStyle, mark=\MyMarkRho]
            table [x index=8, y index=10, col sep = space] {\dataPath/ar3-bndy-kappa.dat};
         \addlegendentry{new ($p=2.9$)}
         \addplot[color=\MyColorKHX, \MyLineStyle, mark=\MyMarkKHX]
            table [x index=8, y index=2, col sep = space] {\dataPath/ar3-bndy-kappa.dat};
         \addlegendentry{\cite{KamHuaXu13}}
          \end{loglogaxis}
      \end{tikzpicture}
      \caption{$\kappa(A)$ vs.\ number of elements $N$}\label{fig:III:ar:A}
   \end{subfigure}%
   \begin{subfigure}[t]{0.5\textwidth} \centering
      \begin{tikzpicture}
      \begin{loglogaxis}[legend pos=north west, ymin=4.0e+01, ymax=2.0e+06]
         \addplot[color=\MyColorExact, \MyLineStyleScaled, mark=\MyMarkExact,]  
            table [x index=8, y index=4, col sep = space] {\dataPath/ar3-bndy-kappa.dat};
         \addlegendentry{exact}
         \addplot[color=\MyColorRho, \MyLineStyleScaled, mark=\MyMarkRho]
            table [x index=8, y index=11, col sep = space] {\dataPath/ar3-bndy-kappa.dat};
         \addlegendentry{new ($p=2.9$)}
         \addplot[color=\MyColorKHX, \MyLineStyleScaled, mark=\MyMarkKHX]
            table [x index=8, y index=5, col sep = space] {\dataPath/ar3-bndy-kappa.dat};
         \addlegendentry{\cite{KamHuaXu13}}
          \end{loglogaxis}
      \end{tikzpicture}
      \caption{$\kappa(\SAS)$ vs.\ number of elements $N$}\label{fig:III:ar:SAS}
   \end{subfigure}
   \caption{3D meshes with a fixed $N=29\,478$ 
      and changing aspect ratio}\label{fig:III:ar}
\end{figure}

\section{Conclusions}\label{SEC:conclusion}

In the previous sections we have studied the linear finite element approximation of the boundary value problem~\eqref{eq:bvp1} with general nonuniform meshes and developed bounds for the condition numbers of the stiffness matrix and Jacobi preconditioned stiffness matrix.
The main result (Theorem~\ref{thm:condiiton:number}) shows that the density function approach of Fried~\cite{Fri73} can be made mathematically rigorous for general domains and lead to estimates that provide more detail and are sharper than existing estimates for general adaptive meshes in one and two dimensions and comparable in three and higher dimensions.
Moreover, the bounds in Theorem~\ref{thm:condiiton:number} involves a factor $d_K$ which describes the maximum distance from element $K$ to the boundary of the domain and becomes smaller when $K$ is closer to $\partial \Omega$.
They reveal that the mesh concentration near the boundary has less influence on the condition number than the mesh concentration in the interior of the domain.
This is especially true for the Jacobi preconditioned system where the former has little or almost no effect on the condition number.
The numerical results presented in Sect.~\ref{sect:num} confirm the theoretical analysis although they also suggest that the new bounds could be further improved in two and higher dimensions.

\printbibliography{}

\end{document}